\pgfplotsset{compat=1.15}
\def\tsc#1{\csdef{#1}{\textsc{\lowercase{#1}}\xspace}}
\newtheorem{theorem}{Theorem}
\newtheorem{lemma}[theorem]{Lemma}
\newdefinition{remark}{Remark}
\newproof{proof}{Proof}
\begin{document}
\let\WriteBookmarks\relax
\def\floatpagepagefraction{1}
\def\textpagefraction{.001}

% Short title
\shorttitle{AMR Control}    

% Short author
\shortauthors{A. Peterson,  J. Romero, P. Aguirre, K. Achraya, B. Nasri}  

% Main title of the paper
\title [mode = title]{Assessing the Impact of Mutations and Horizontal Gene Transfer on the AMR Control: A Mathematical Model}  

% Title footnote mark
% eg: \tnotemark[1]
%\tnotemark[<tnote number>] 

% Title footnote 1.
% eg: \tnotetext[1]{Title footnote text}
%\tnotetext[<tnote number>]{<tnote text>} 

%%%%author 1
\author[1]{ Alissen Peterson}
% Email id of the first author
\ead{alissen.peterson@sansano.usm.cl }
% Credit authorship
% eg: \credit{Conceptualization of this study, Methodology, Software}
\credit{Conceptualization, Data Curation, Formal Analysis, Writing - Original Draft Preparation, Visualization}
% Address/affiliation
\affiliation[1]{organization={Departamento de Matemática, Universidad Técnica Federico Santa María},
           % addressline={Casilla 110-V}, 
            city={Valparaiso},
%          citysep={}, % Uncomment if no comma needed between city and postcode
          %  postcode={}, 
            %state={},
            country={Chile}}

%%%%%author 2
\author[2]{ Jhoana P. Romero-Leiton}[orcid=0000-0002-2788-178X]

% Email id of the second author
\ead{jhoana.romero@umanitoba.ca}
% Credit authorship
\credit{Conceptualization, Data Analysis, Methodology, Formal Analysis, Writing- Review and Editing, Supervision, Resources }

% Address/affiliation
\affiliation[2]{organization={Department of Mathematics, University of Manitoba},
        %    addressline={}, 
            city={Winnipeg},
%          citysep={}, % Uncomment if no comma needed between city and postcode
          %  postcode={}, 
        %    state={},
            country={Canada}}

%%%%author 3
\author[1]{ Pablo Aguirre}

% Corresponding author indication
%\cormark[*]
% Email id of the first author
\ead{pablo.aguirre@usm.cl}
% Credit authorship
% eg: \credit{Conceptualization of this study, Methodology, Software}
\credit{Conceptualization, Methodology, Formal Analysis, Writing- Review and Editing, Supervision, Resources}

%%%%author 4
\author[4]{ Kamal R. Acharya}[orcid=0000-0001-6707-3536]
% Email id of the first author
\ead{kamal.acharya@umontreal.ca  }
% Credit authorship
% eg: \credit{Conceptualization of this study, Methodology, Software}
\credit{Conceptualiation, Methodology, Writing - Review and Editing  }

% Address/affiliation
\affiliation[4]{organization={
Département de Médecine
Sociale et Préventive, École de
Santé Publique},
          %  addressline={}, 
            city={Montreal},
%          citysep={}, % Uncomment if no comma needed between city and postcode
      %      postcode={}, 
       %     state={},
            country={Canada}}
% Footnote text
%\fntext[1]{}

%%%%author 5
\author[4]{ Bouchra Nasri\corref{cor1}}[orcid=0000-0001-6334-3105]
% Corresponding author indication
\cortext[cor1]{Corresponding author.}
% Email id of the first author
\ead{bouchra.nasri@umontreal.ca }
% Credit authorship
% eg: \credit{Conceptualization of this study, Methodology, Software}
\credit{Conceptualiation, Methodology, Writing - Review and Editing}

% For a title note without a number/mark
%\nonumnote{}

% Here goes the abstract
\begin{abstract}
Antimicrobial resistance (AMR) poses a significant threat to public health by increasing mortality, extending hospital stays, and increasing healthcare costs. It affects people of all ages and affects health services, veterinary medicine, and agriculture, making it a pressing global issue. Mathematical models are required to predict the behaviour of AMR and to develop control measures to eliminate resistant bacteria or reduce their prevalence. This study presents a simple deterministic mathematical model in which sensitive and resistant bacteria interact in the environment, and mobile genetic elements (MGEs) are functions that depend on resistant bacteria. We analyze the qualitative properties of the model and propose an optimal control problem in which avoiding mutations and horizontal gene transfer (HGT) are the primary control strategies. We also provide a case study of the resistance and multidrug resistance (MDR) percentages of \textit{Escherichia coli} to gentamicin and amoxicillin in some European countries using data from the European Antimicrobial Resistance Surveillance Network (EARS-Net). Our theoretical results and numerical experiments indicate that controlling the spread of resistance in southern European regions through the supply of amoxicillin  is challenging. However, the host immune system is also critical for controlling AMR.
\end{abstract}

% Keywords
% Each keyword is seperated by \sep
\begin{keywords}
Sensitive bacteria\sep Resistant bacteria \sep \textit{Escherichia coli} \sep Multidrug resistance   \sep Europe  \sep Qualitative analysis  \sep Optimal control \sep Immune system
\end{keywords}

\maketitle

%%%%
\section{Introduction}\label{sec-intro}
In 1928, Alexander Fleming introduced penicillin. To date, this antimicrobial has been widely used in the medical world, causing a significant impact owing to its high percentage of bacterial death. Antimicrobials have entirely revolutionized the world since then by prolonging human life (even in animals and plants) because a simple flu or a complex disease are no longer a certain death \cite{newell2010food}. However, since the creation of the first antimicrobial, the human population has abused their benefits, causing pathogens to mutate more quickly, thus creating antimicrobial resistance (AMR).
\par
AMR occurs when pathogens develop the ability to defeat drugs designed to kill them \cite{mechanisms}. This means that the pathogen is not killed and continues to grow. Infections caused by AMR pathogens are difficult or sometimes impossible to treat \cite{about_antimicrobial}. AMR infections usually require extended hospital stays and use of expensive and toxic alternatives.
\par 
How do pathogens become resistant to antimicrobials? There are two mechanisms of resistance acquisition \cite{mechanisms}:   (a)  resistance by mutations,  in which a subset of pathogens derived from a susceptible population develops mutations in genes that affect the activity of the drug, resulting in the survival of the pathogen in the presence of antimicrobial molecules. (b) Horizontal gene transfer: This scenario involves acquiring foreign DNA material that encodes resistance determinants. Classically, pathogens acquire external genetic material through three main strategies: i) transformation (incorporation of naked DNA), ii) transduction (phage-mediated), and iii) conjugation (pathogenic “sex”). The last method uses mobile genetic elements (MGEs) as vehicles to share valuable genetic information. The essential MGEs are plasmids and transposons, which play crucial roles in developing and disseminating AMR in clinically relevant organisms \cite{munita2016mechanisms}.
\par
AMR is a serious public health problem worldwide. This is a global problem with geographical variation. Acquired AMR is common in apparently healthy isolates. Several pathogens, such as {\it Vibrio cholerae}, {\it Shigella flexneri}, {\it Streptococcus
pneumoniae}, and {\it Escherichia coli}, are increasingly developing resistance, particularly to low-cost first-line broad-spectrum antimicrobials. Introducing new drugs (e.g., Fluoroquinolones) has been relatively rapid, followed by the emergence and dissemination of resistant strains \cite{hoge1998trends}. Outbreaks occur as resistance develops, which may result in high mortality rates. 
\par
Since AMR is a natural response of pathogens to an exposure to antimicrobials, an effective control strategy has to be one of the containment strategies aimed at reducing the rate of emergence and spread of resistance. According to \cite{komolafe2003antibiotic}, these goals can be achieved using four control strategies:  decrease in selective pressure, adoption of reasonable infection control, surveillance of antibiotic resistance, and increase in research activities. 
\par
Numerous mathematical models have been developed with the aim of comprehending the AMR issue through a mathematical lens. Some of the most recent studies help to understand the dynamics of transmission and spread of AMR \cite{ibarguen2013interactions, ibarguen2014mathematical, mostefaoui2014mathematical, ternent2015bacterial, jin2015mathematical, dacsbacsi2017fractional, merdan2017comparison, esteva2018modeling, birkegaard2018send}. In contrast, others have focused on the influence of the immune response \cite{dacsbacsi2016mathematical, mondragon2016simple, dacsbacsi2016dynamics, ibarguen2018mathematical, dacsbacsi2018analysis} and on the acquisition of resistance by plasmids \cite{ibarguen2016mathematical, ibarguen2019stability, leclerc2019mathematical}. Regarding AMR control, only a few studies have established analytical results for the optimal control of an infectious disease under drug resistance (see, for example, \cite{BLL97, SPH98, PBK15, CCE19, LLY17, romero2022optimal}.
\par
Instead of considering a population model for hosts infected with a microbial disease, in this study, we simplify the mathematical model formulated by \cite{ibarguen2019stability} to propose a simple optimal control problem for bacterial resistance to antibiotics. In particular, we assumed that bacteria interact under resistance by mutations and HGT (by plasmids and transposons). We first analyze the qualitative properties of the model. We then prove the existence of controls and present a case study in some European countries using data reported by the European Antimicrobial Resistance Surveillance Network (EARS-Net) \cite{european2010european} for \textit{Escherichia coli} bacteria. Our findings show that it is possible to significantly decrease the spread of resistance in geographic regions of Europe with low rates of resistance and multidrug resistance (MDR). In addition, one of the most significant results of this study shows that the immune system plays a fundamental role in decreasing the spread of resistant bacteria.
\par
The remainder of this paper is organized as follows. The model is introduced in Section \ref{sec-the model}. Section \ref{sec:qualitative} presents an analytical study of the model’s dynamics. The optimal control problem is discussed in Section \ref{sec:control}. The case of study is treated in Section \ref{sec:case} Finally, Section \ref{sec:discussion} presents conclusions, open questions, and future work.

%%%%%%%%%
%%%%%%%%%%%%%%%%%%%%%%%%%%%%%%%%%model formulation
\section{The model}\label{sec-the model}
\cite{ibarguen2019stability} proposed a mathematical model to describe the spread of resistance in bacteria populations, considering plasmids and antibiotic concentration as dynamic variables. In this model, we assume that $S(t)$ and $R(t)$ represent the population of sensitive and resistant bacteria, respectively, at time $t$, where $\textbf{X}(t)=(S(t), R(t))$ is the state vector. The growth of sensitive and resistant bacteria follows logistic dynamics with carrying capacity $K$ and birth rates $\beta_S$ and $\beta_R$, respectively. The model assumes that MGEs are a function of the population of resistant bacteria, represented by $P(R)=aR^{n}$, where $a$ is a constant of proportionality and $n\geq 1$. Antibiotics are assumed to be administered at a constant rate $\Lambda$, with $\bar{\alpha}$ representing the elimination ratio of sensitive bacteria by antibiotics, $\bar{\gamma}$ representing the elimination rate of bacteria by the immune response, and $\bar{q}$ corresponding to the mutation ratio of sensitive bacteria by antibiotics. 
\par 
Preventing natural and acquired mutations by MGEs, strategies to prevent the acquisition of resistance, are introduced as  control variables. The study uses these measures to minimize the cost function. The control variables $h_1$ and $h_2$ represent the control of natural mutations and the acquisition of MGEs, respectively, and take values between $0$ and $1$. If the measure is ineffective, $h_i=0$ for $i=1,2$; if it is fully effective, $h_i=1$ for $i=1,2$. One objective of the study is to minimize the number of resistant bacteria. Therefore, the model defines an optimal control problem as follows: 
\begin{equation}\label{model}
    \left\{ \begin{array}{ll}
              &J[\textbf{h}]=\displaystyle\int_0^T\left[cR+(w_1+w_2h_1)h_1+(b_1+b_2h_2)h_2 \right] dt, \\ \\
              &\dfrac{dS}{dt}=  \beta_SS\left(1-\dfrac{S+R}{K}\right)-(\bar\alpha\Lambda+\bar \gamma)S-(1-h_1(t))\bar q\Lambda S-(1-h_2(t)) aRS, \\ \\
              &\dfrac{dR}{dt}= \beta_RR\left(1-\dfrac{S+R}{K}  \right)+(1-h_1(t))\bar q\Lambda S +(1-h_2(t)) aRS-\bar\gamma R,\\ \\
              &\textbf{X}(0)=(S(0), R(0))=\textbf{X}_0 \\  \\
              &\textbf{X}(T)=(S^*, R^*)=\textbf{X}_1.
            \end{array} \right. 
\end{equation}

In the above formulation, $J[\textbf{h}]$ represents the value of the net profit of resistant bacteria. Since $h_1$ and $h_2$ can be interpreted as controls by education, $cR+(w_1+w_2h_1)h_1+(b_1+b_2h_2)h_2 $ is the cost by education. We use quadratic forms in the functional because they are a traditional form in simple linear systems and often used in biological models (see, e.g., \cite{romero2022optimal, optimal}).
 Additionally, we assume that control variables are in the set
$$
\mathcal{U}=\left\{h(t): h(t) \; \text{is Lebesgue measurable and} \; 0\leq h(t)\leq 1, t\in[0,T]\right\},
$$
called \textit{the set of admissible controls}.
\par
All the parameters in Model \eqref{model} are positive and listed in Table \ref{table_1}. We make use of the same assumption as presented in \cite{ibarguen2019stability}  that the growth rate of resistant bacteria is either equal to or lower than that of sensitive bacteria, namely $\beta_R\leq \beta_S$.

\begin{table*}[h]
	\caption{Description and units of the Model \eqref{model} parameters.}%%%Table caption goes
	\label{table_1}
		\begin{tabular*}{\tblwidth }{@{}LLL@{}}%%%The number of columns has 
		\toprule
			Parameter   &     Description &Dimension   \\
\midrule
$\beta_S$  & Growth rate of sensitive bacteria  & $1/time$   \\
$\beta_R$  & Growth rate of resistant bacteria  &$1/time$   \\
$\bar\alpha$& Elimination percentage of sensitive bacteria by antibiotics  &  Dimensionless   \\
$1/\Lambda$   &  Frequency of taking the medication   & Time   \\
$\bar\gamma$ &Elimination rate of bacteria by the host immune response & $1/time$   \\
$\bar q$   &Mutation percentage of sensitive bacteria by antibiotics  & Dimensionless  \\
$a$        &Constant of proportionality &$1/ (pop\times time)$  \\
$K$        &Carrying capacity of bacteria & Dimensionless  \\
		\bottomrule
	\end{tabular*}
\end{table*}

%%%%%%
%%%%%%%%%%%%%%%%%%%%%%%%%%%%%%qualitative analysis
\section{Qualitative behaviour of the model} \label{sec:qualitative}
In order to analyze the impact of preventive measures on the dynamics of Model \eqref{model}, we first assume that $h_1(t)$ and $h_2(t)$ are constant. To simplify the analysis, we introduce the following change of variables:

 \begin{equation}\label{new_variables}
    x=\frac{S}{K}, \quad y=\frac{R}{K}, \quad \text{and} \quad \tau=aKt,
 \end{equation}
 
 with the parameters
 \begin{equation}\label{new_parameters}
    \begin{array}{lllll}
      \beta_s=\dfrac{\beta_S}{aK} & \beta_r=\dfrac{\beta_R}{aK} & q=\dfrac{\bar q\Lambda}{aK} 
      \alpha=\dfrac{\bar\alpha\Lambda}{aK} &\gamma=\dfrac{\bar\gamma}{aK}.  &
    \end{array}
 \end{equation}
 
Therefore, System \eqref{model} can be expressed in dimensionless form as follows:

 \begin{equation}\label{model_res}
    \left\{ \begin{array}{ll}
              \dot x= & \beta_sx[1-(x+y)]-(\alpha+\gamma) x-(1-h_1)qx-(1-h_2)xy \\ \\
              \dot y=&\beta_ry[1-(x+y)]+ (1-h_1)qx+(1-h_2)xy-\gamma y.
            \end{array} \right.
 \end{equation}
The following lemma states the well-posedness of System \eqref{model_res} by establishing that solutions are both non-negative and bounded. Specifically, we establish that $\Omega$, depicted in Figure \ref{fig_invarianta}, is a trapping region, meaning that the orbits of \eqref{model_res} with initial conditions in $\Omega$ will remain within $\Omega$ for all $t\geq 0$.

\begin{figure}[h]
    \centering
\includegraphics[width=8.1 cm, height=6cm]{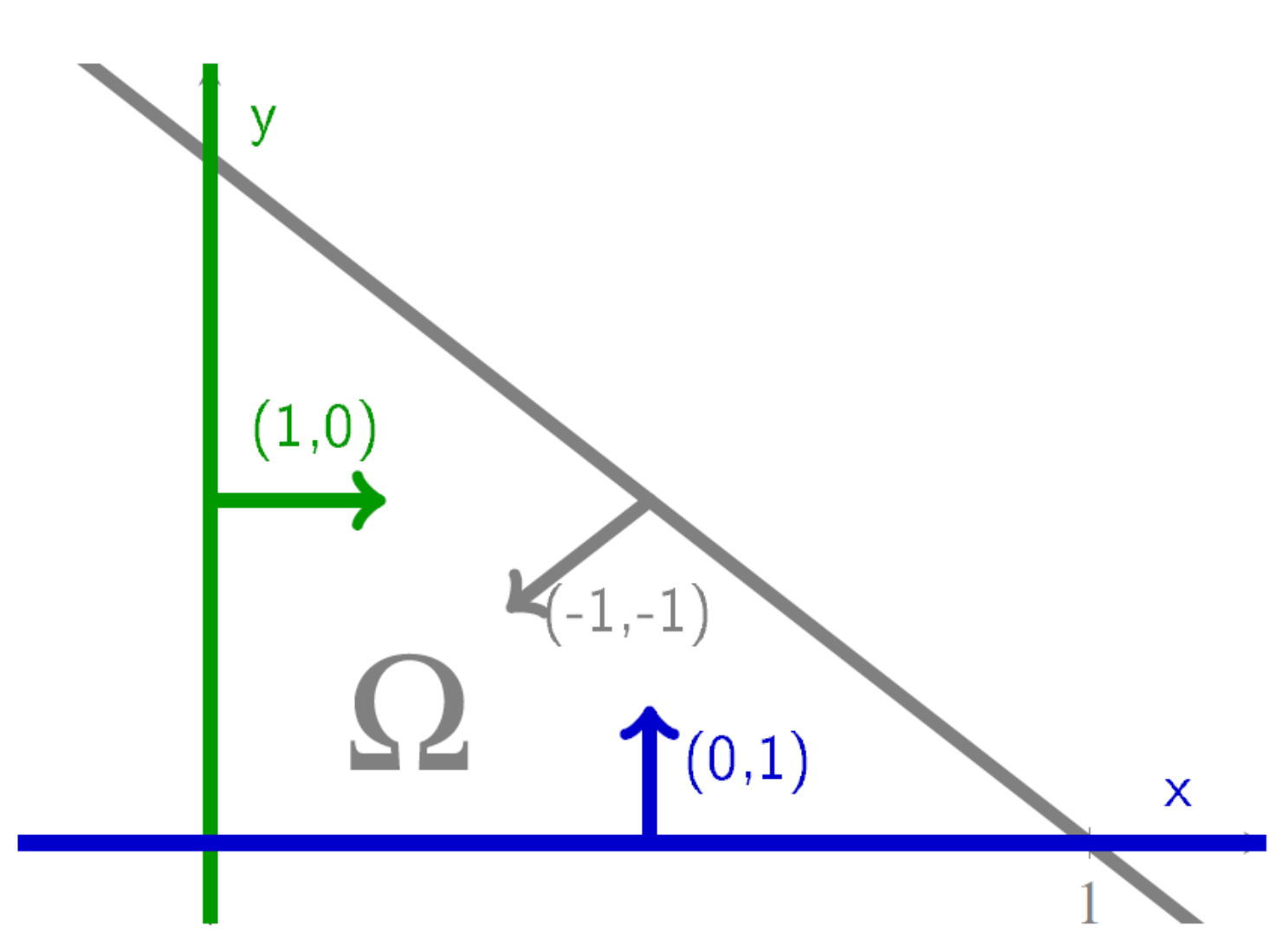}
\caption{Region of biological interest for System \eqref{model_res}. It can be defined as  $\Omega={(x,y)\in\mathbb{R}^2:\ \ y\leq -x+1, \ 0\leq x\leq 1, \ 0\leq y\leq 1}$ in the mathematical context.}
    \label{fig_invarianta}
\end{figure}

\begin{lemma}\label{lemma1}
The set $\Omega$ is a trapping region for System \eqref{model_res}.
\end{lemma}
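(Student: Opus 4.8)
The plan is to establish positive invariance of $\Omega$ by checking that the vector field of System~\eqref{model_res} points inward (or is tangent) along each edge of the triangular boundary $\partial\Omega$. Since the right-hand side of \eqref{model_res} is polynomial, hence locally Lipschitz, the Picard--Lindel\"of theorem supplies local existence and uniqueness of solutions; the boundary analysis will then forbid escape from $\Omega$, and, combined with the boundedness built into $\Omega$, will extend every orbit starting in $\Omega$ to all $t\geq 0$.

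The boundary $\partial\Omega$ consists of three segments, which I would treat in turn. First, on the segment $x=0$, $0\le y\le 1$, every term of $\dot x$ carries a factor $x$, so $\dot x=0$ there; this edge is therefore invariant and keeps $x\ge 0$. Second, on the segment $y=0$, $0\le x\le 1$, the equation for $\dot y$ collapses to $\dot y=(1-h_1)q\,x$, which is nonnegative because $0\le h_1\le 1$, $q>0$, and $x\ge 0$; the flow thus points into $\Omega$ (or is tangent at $x=0$), preserving $y\ge 0$.

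The decisive computation is on the hypotenuse $x+y=1$, where the outward normal is the direction $(1,1)$, so invariance requires $\dot x+\dot y\le 0$. On this edge the logistic factor $1-(x+y)$ vanishes, and adding the two equations cancels the mutation and horizontal-gene-transfer terms $(1-h_1)q\,x$ and $(1-h_2)x y$, leaving $\dot x+\dot y=-(\alpha+\gamma)x-\gamma y=-\alpha x-\gamma(x+y)=-\alpha x-\gamma<0$, using $\alpha,\gamma>0$ and $x+y=1$. Hence the field points strictly inward and no orbit leaves through the hypotenuse.

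Combining the three estimates shows $\Omega$ is forward invariant, i.e.\ a trapping region in the sense stated. I do not expect a genuine obstacle: the argument reduces to elementary sign checks, and the only point requiring mild care is the cancellation on the hypotenuse, which is precisely what reduces $\dot x+\dot y$ to the clean expression $-\alpha x-\gamma$. A minor secondary subtlety is the behaviour at the corners where two edges meet, but since each adjacent edge is either invariant or inflowing there is no escape route, and a standard Nagumo-type tangency argument disposes of them.
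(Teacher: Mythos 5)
Your proposal is correct and follows essentially the same route as the paper's proof in Appendix A1: an edge-by-edge check that the vector field is tangent along $x=0$, nonnegative in $y$ along $y=0$, and strictly inward along the hypotenuse, where the same cancellation yields $\dot x+\dot y=-\alpha x-\gamma<0$ (the paper states this as $-f_1-f_2=\alpha x+\gamma>0$). Your explicit appeals to Picard--Lindel\"of and a Nagumo-type tangency argument at the corners make precise what the paper handles by inspecting the corner points $(0,0)$, $(0,1)$, $(1,0)$ directly, but the substance is identical.
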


The proof is available in Appendix \ref{appendix:A1}. \\
\par 
In order to give a biological meaning to the conditions that ensure the existence and stability of the equilibrium points of System \eqref{model}, we introduce the following epidemiological thresholds:

\begin{equation}\label{thresholds}
    \begin{array}{llll}
      R_r= \dfrac{\beta_r}{\gamma},              &  R_s=\dfrac{\beta_s}{\alpha+\gamma+q(1-h_1)}, &      \\ \\
      h_s=\dfrac{1-h_2}{\alpha+\gamma+q(1-h_1)}, & h_{r1}=\dfrac{1-h_1}{\gamma},              &h_{r2}=\dfrac{1-h_2}{\gamma}.
    \end{array}
\end{equation}
The classic definition of the \textit{Basic Reproductive Numbe}r, denoted by $\mathcal{R}0$, can be used to interpret the thresholds mentioned above. Specifically, $R_r$ represents the number of bacteria produced by the fraction of resistant bacteria that evade the immune response, while $R_s$ represents the number of bacteria produced by the fraction of susceptible bacteria that do not have natural mutations and are not eliminated by the antibiotic or immune system. Moreover, $h_s$ represents the number of mutations due to uncontrolled horizontal gene transfer since the bacteria do not have natural mutations and are not eliminated by the antibiotic or immune system. Additionally, $h_{r1}$ and $h_{r2}$ represent the number of uncontrolled mutations (natural or acquired) because the bacteria are not attacked by the immune response. By using \eqref{thresholds}, System \eqref{model_res} can be equivalently expressed as:

 \begin{equation}\label{model_res2}
    \left\{ \begin{array}{ll}
              \dot x= & x\left[\dfrac{\gamma h_{r2}}{h_s}[R_s(1-(x+y))-1-h_s y]\right]:=f_1(x,y),\\ \\
              \dot y=&\gamma[R_r y(1-(x+y))+h_{r1}qx+h_{r2}xy-y]:=f_2(x,y).
            \end{array} \right.
 \end{equation}

%%%%%%%%%%%%%%%%%%%%%%%%%%%%%%equilibrium points

\subsection{Existence of equilibrium points}
In this section, we explore the existence of  equilibrium points for System \eqref{model_res2} by solving the following system of algebraic equations:

\begin{equation}\label{eq_equilibria}
\begin{array}{ll}
0=& x\left[ \dfrac{\gamma h_{r2}}{h_s}\left[R_s(1-(x+y))-1-h_s y] \right]\right]  \\ \\
0=&\gamma \left[R_r y(1-(x+y))+h_{r1}qx+h_{r2}xy-y \right].
\end{array}
\end{equation}

After performing some algebraic manipulations in \eqref{eq_equilibria}, it has been determined that System \eqref{model_res2} possesses three equilibrium points. These equilibrium points include the trivial equilibrium denoted as $\textbf{P}_0 = (0,0)$, another equilibrium point where resistant bacteria persists denoted as $\textbf{P}_1= \left(0, \dfrac{R_r-1}{R_r} \right)$, and a coexistence equilibrium point denoted as $\textbf{P}^*=\left(\frac{R_s-1}{R_s}-\left[\frac{R_s+h_s}{R_s} \right]y^*, y^*\right)$, where $R_s$, $R_r$, and $h_s$ are the thresholds defined in \eqref{thresholds} and $y^*$ will be defined in \eqref{y}. 
\par 
The conditions for the existence of these equilibrium  points are summarized in the following theorem.

\begin{theorem}\label{teoexistencia} 
For System \eqref{model_res2} in 
$\Omega$, the equilibrium points  are given by $\textbf{P}_0=(0,0)$, which always exists;  $\textbf{P}1=\left(0, \dfrac{R_r-1}{R_r} \right)$, which exists if and only if $R_r>1$; and 
$\textbf{P}^*=(x^*, y^*)$, where $x^*$ and $y^*$ are defined by \eqref{2.14} and \eqref{y}, respectively. The equilibrium point $\textbf{P}^*$ exists if either of the following conditions are satisfied:
\begin{enumerate}
\item $R_s>1$, $0<h_1<1$, $0<h_2<1$, and $R_r (h_s+1)< h_s +R_s$.
\item $R_s>1$, $h_1=1$, $0<h_2<1$, and $h{r2}(R_s-1)+R_r - R_s>0$.
\end{enumerate}
 If $h_2=1$, the coordinates of $\textbf{P}^*=(x^*, y^*)$ are given by \eqref{2.14} and $y^*=\dfrac{a_0}{a_1}$, respectively, with $a_0$ and $a_1$ defined on \eqref{element_quadratic_y}, provided $R_s>1$, $0<h_1<1$  and  $-h_{r1}q(R_s+h_s)+R_r - R_s>0$.
\end{theorem}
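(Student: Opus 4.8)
The plan is to solve the equilibrium system \eqref{eq_equilibria} by a case split driven by the factored form of $f_1$. Since $f_1$ carries an overall factor of $x$, every equilibrium satisfies either $x=0$ or the vanishing of the bracketed factor. Setting $x=0$ reduces $f_2=0$ to $\gamma y\,[R_r(1-y)-1]=0$, whose roots $y=0$ and $y=(R_r-1)/R_r$ produce $\mathbf{P}_0$ and $\mathbf{P}_1$; the latter lies in $\Omega$ (that is, has $y\ge 0$, and automatically $y<1$) exactly when $R_r>1$, while $\mathbf{P}_0$ requires no hypothesis. This disposes of the two boundary equilibria at once.

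For the coexistence point I would take $x\neq 0$, so that the bracket in $f_1$ must vanish; solving it for $x$ gives the affine relation \eqref{2.14}, namely $x^{*}=\frac{R_s-1}{R_s}-\frac{R_s+h_s}{R_s}\,y^{*}$, together with the useful identity $1-(x^{*}+y^{*})=(1+h_s y^{*})/R_s$. Substituting both into $f_2=0$ and clearing the denominator $R_s$ collapses the problem to a single scalar equation $g(y^{*})=0$, where $g$ is the quadratic whose coefficients are recorded in \eqref{element_quadratic_y} and whose admissible root is \eqref{y}. The existence question then reads: does $g$ have a root $y^{*}$ for which the pair $(x^{*},y^{*})$ from \eqref{2.14} lies in $\Omega$? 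Because $x^{*}\ge 0$ is equivalent to $y^{*}\le \bar y:=\frac{R_s-1}{R_s+h_s}$ (the abscissa at which $x^{*}=0$), the target is a root in the interval $(0,\bar y)$.

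The key step is to locate this root by evaluating $g$ at the two endpoints rather than through the root formula. A short computation gives $g(0)=h_{r1}q(R_s-1)$, which is positive precisely when $R_s>1$, and, using the identity above, $g(\bar y)=\bar y\,R_s\,\dfrac{R_r(1+h_s)-(R_s+h_s)}{R_s+h_s}$, whose sign is governed by the comparison $R_r(h_s+1)\lessgtr h_s+R_s$. Hence, in the generic regime $0<h_1<1$, $0<h_2<1$ with $R_s>1$, the hypothesis $R_r(h_s+1)<h_s+R_s$ forces $g(0)>0>g(\bar y)$, and the Intermediate Value Theorem yields a root $y^{*}\in(0,\bar y)$, i.e.\ a coexistence equilibrium inside $\Omega$; this settles item~(1). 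The remaining two regimes are degenerate specializations of the same picture: when $h_1=1$ one has $h_{r1}=0$, so the constant term $g(0)$ vanishes, the spurious root $y=0$ splits off, and the surviving linear factor contributes a feasible positive root exactly under the sign condition on its coefficient, which reduces to $h_{r2}(R_s-1)+R_r-R_s>0$ of item~(2); when $h_2=1$ one has $h_s=0$ and $h_{r2}=0$, the leading coefficient of $g$ degenerates, the equation becomes linear with solution $y^{*}=a_0/a_1$, and feasibility reduces to the stated inequality.

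The main obstacle I anticipate is twofold. First, the endpoint sign bookkeeping that converts the analytic condition $g(\bar y)<0$ into the clean threshold inequalities of the three regimes is delicate: the leading coefficient of $g$ may take either sign, so one must confirm that the root produced by the Intermediate Value Theorem is the biologically relevant one and, in the case $h_1=1$, that it is genuinely separated from the spurious root $y=0$ and still satisfies $y^{*}<\bar y$. Second, the limits $h_2=1$ and $h_1=1$ make the factor $\gamma h_{r2}/h_s$ appearing in \eqref{model_res2} and \eqref{eq_equilibria} indeterminate, so for those specializations I would not argue from \eqref{model_res2} but return to the equivalent polynomial form \eqref{model_res}, where the same substitution is well defined for all admissible parameters. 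Carrying the argument through uniformly while respecting these removable singularities is where most of the care will be needed.
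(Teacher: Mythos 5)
Your proposal is correct and follows essentially the same route as the paper: the same case split on $x=0$ versus the vanishing bracket of $f_1$, the same reduction via \eqref{2.14} to the quadratic \eqref{quadratic_y} with coefficients \eqref{element_quadratic_y}, and the same sign analysis — your endpoint evaluation of $g$ at $\bar y = y_{max}$ is precisely the computation the paper carries out in Appendix \ref{appendix:A2} to establish $p(y_{max})<0$ under $R_r(h_s+1)<h_s+R_s$. The only cosmetic difference is that you invoke the Intermediate Value Theorem on $[0,\bar y]$ where the paper exhibits the unique positive root \eqref{y} of the concave parabola directly; since $a_2>0$ whenever $h_2<1$ and $\beta_s\geq\beta_r$, the two arguments identify the same root, and your cautions about the degenerate cases $h_1=1$, $h_2=1$ match the paper's handling of $a_0=0$ and $a_2=0$.
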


\begin{proof}
For one hand, if  $x=0$   in both equations of \eqref{eq_equilibria} we have that either $y=0$ or
$y=\dfrac{R_r-1}{R_r}$.
Thus, we obtain the trivial equilibrium $\textbf{P}_0 = (0,0)$, and an equilibrium point  where resistant bacteria persist $\textbf{P}_1= \left(0, \dfrac{R_r-1}{R_r} \right)$ which exists only if $R_r>1$.
\noindent
On the other hand, if $x\neq 0$ the equations \eqref{eq_equilibria} can be rewritten as

\begin{equation}\label{eq_equilibria3}
 \begin{array}{ll}
& R_s-1-R_sx-[R_s+h_s]y =0  \\ \\
&-R_ry^2+[h_{r2}-R_r]xy+(R_r-1)y+h_{r1}qx=0.
\end{array}
\end{equation}

\noindent 
From the first equation of \eqref{eq_equilibria3} we obtain
\begin{equation}\label{eq000}
    \frac{R_s-1}{R_s}=x+\left[\frac{R_s+h_s}{R_s} \right]y.
\end{equation}
\noindent
It follows that a necessary condition for the existence of an equilibrium with susceptible and resistant bacteria is $R_s>1$.  By solving for $x$ in \eqref{eq000}, we obtain

\begin{equation}\label{2.14}
    x= \frac{R_s-1}{R_s}-\left[\frac{R_s+h_s}{R_s} \right]y.
\end{equation}
\noindent
Therefore, $x$ defined above is positive if and only if the following condition is fulfilled:
\begin{equation}\label{ymax}
    y< y_{max},  \; \; \text{where} \;\; y_{max}=\frac{R_s-1}{R_s+h_s}.
\end{equation}
\noindent
Now, replacing the expression $R_s-1=(R_s+h_s)y_{max}$ in  \eqref{2.14},  we obtain
\begin{equation}\label{2.17}
    x^*=\frac{(R_s+h_s)(y_{max}-y)}{R_s}.
\end{equation}
Substituting \eqref{2.17} into the second equation in \eqref{eq_equilibria3} and using \eqref{thresholds} we obtain the following quadratic equation for the variable $y$:
\begin{equation}\label{quadratic_y}
   p(y)= -a_2y^2+a_1y+a_0=0,
\end{equation}
where

\begin{equation}\label{element_quadratic_y}
\begin{array}{rcl}
a_{2}&=&h_{r2}(R_s+h_s)-h_sR_r  \\
   &=&\left(\dfrac{1-h_2}{\gamma}\right)\left(\dfrac{\beta_s+(1-h_2) }{\alpha+\gamma+q(1-h_1)}\right)-\left(\dfrac{1-h_2}{\alpha+\gamma+q(1-h_1)}\right)\left(\dfrac{\beta_r}{\gamma}\right)\\
   &=&\dfrac{h_{r2}}{\alpha+\gamma+q(1-h_1)}(\beta_s-\beta_r)+h_{r2}h_s,\\ \\
 a_1&=&(R_s+h_s)\left[(h_{r2}-R_r)y_{max}-h_{r1}q\right]+R_s(R_r-1) \\
   &=&(R_s+h_s)\left[(h_{r2}-R_r)\dfrac{R_s -1}{R_s + h_s} -h_{r1}q\right]+R_{s} (R_r-1) \\
   &=&(h_{r2}-R_r)(R_s-1)-h_{r1}q(R_s+h_s)+R_sR_r-R_s \\ 
   &=&h_{r2}(R_s-1)-R_r(R_s-1) -h_{r1}q(R_s+h_s)+R_sR_r -R_s\\
   &=&h_{r2}(R_s-1)-h_{r1}q(R_s+h_s)+R_r - R_s,\\\\
 a_0&=&h_{r1}q(R_s+h_s)y_{max}  \\
   &=&h_{r1}q(R_s+h_s)\dfrac{R_s - 1}{R_s+h_s}.\\
   &=&h_{r1}q(R_s-1).
\end{array}
\end{equation}

Assuming $R_{s} > 1$ we have that $a_0$ is non-negative (in particular, if $h_{r1}>0$ ---or equivalently, $h_1<1$--- then $a_0>0$).  Similarly, provided $h_{r2}>0$ ---or equivalently, $h_2<1$--- we have that $a_2>0$ since $\beta_s>\beta_r$. In such case \eqref{quadratic_y} has only one positive root: 
\begin{equation}\label{y}
  y^*=\frac{a_1 + \sqrt{a_1^2+4a_2a_0}}{2a_2}.   
\end{equation}
Hence the positive equilibrium point is located at $(x^*,y^*)$ where $x^*$ is given in \eqref{2.14} and $y^*$ is as in \eqref{y}, provided $y^*<y_{max}$ given in \eqref{ymax}.
\par 
To prove that $y^*<y_{max}$, we consider again the polynomial \eqref{quadratic_y}
with coefficients \eqref{element_quadratic_y}. Note that  $p(y)$ is a concave parabola.  Since $y^*>0$ and $p(0)=a_0$, to make sure that $y^*<y_{max}$, it is enough to show that $p(y_{max})<0$ (see Appendix \ref{appendix:A2}). 
\par 
Note that if $h_{r2}>0$ and $h_{r1}=0$ (or equivalently $h_1=1$), we have $a_0=0$. In this case, the positive equilibrium is still given by $(x^*, y^*)$ provided $h_{r2}(R_s-1)+R_r - R_s>0$. Additionally, if $h_{r2}=0$, we have $a_2=0$ and $y^*=\dfrac{a_0}{a_1}>0$ provided $h_1<1$ and  $-h_{r1}q(R_s+h_s)+R_r - R_s>0$.
\end{proof}

%%%%%%%
\subsection{Stability of the equilibrium points}
In order to assess the stability of the equilibrium points of System \eqref{model_res2}, the linearization of the vector field defined by the right-hand side of \eqref{model_res2} is utilized. At an equilibrium point $\textbf{P}=(x,y)$, the Jacobian matrix $\textbf{J}$ of \eqref{model_res2} is defined as follows:

\begin{equation}\label{Jacobian_general}
    \textbf{J}(\textbf{P})=\left(
                             \begin{array}{cc}
                             \dfrac{\gamma h_{r2}}{h_s}[R_s-1-2R_sx-(R_s+h_s)y]   & -\dfrac{\gamma h_{r2}}{h_s}(R_s+h_s)x  \\ \\
                              \gamma[h_{r1}q+(h_{r2}-R_r)y]  &\gamma[R_r-1-(R_r-h_{r2})x-2R_ry]
                             \end{array}
                           \right).
\end{equation}

By applying the matrix defined in \eqref{Jacobian_general} to each of the three equilibrium points defined in Theorem \ref{teoexistencia}, we can establish the following outcome.

\begin{theorem}\label{teo_stability}
Let us consider System \eqref{model_res2} with the equilibrium points state in Theorem \ref{teoexistencia}.  Therefore
\begin{enumerate}
    \item The origin $\textbf{P}_0=(0,0)$ is locally asymptotically stable (\textit{LAS}) in $\Omega$ if and only if $R_s<1$ and $R_r<1$.
    \item The equilibrium $\textbf{P}_1=\left(0, \dfrac{R_r-1}{R_r}  \right)$ is \textit{LAS} in $\Omega$ if $R_r>1$ and $R_r>\dfrac{R_s +h_s}{1+hs}$.
    \item The equilibrium $\textbf{P}^*=(x^*,y^*)$ is\textit{ LAS} in $\Omega$ if $R_s>1$, $0<h_1<1$, $0<h_2<1$ and $R_r<\dfrac{R_s+h_s}{(h_s+1)}$.
\end{enumerate}
\end{theorem}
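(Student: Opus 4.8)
The plan is to apply the trace--determinant criterion for planar systems: an equilibrium $\textbf{P}$ of \eqref{model_res2} is locally asymptotically stable exactly when $\operatorname{tr}\textbf{J}(\textbf{P})<0$ and $\det\textbf{J}(\textbf{P})>0$, with $\textbf{J}$ the Jacobian \eqref{Jacobian_general}. I would handle the three equilibria in increasing order of difficulty, using repeatedly that the off-diagonal entry $-\tfrac{\gamma h_{r2}}{h_s}(R_s+h_s)x$ vanishes whenever $x=0$. At $\textbf{P}_0=(0,0)$ the matrix is lower triangular, so its eigenvalues are the diagonal entries $\tfrac{\gamma h_{r2}}{h_s}(R_s-1)$ and $\gamma(R_r-1)$; since $\gamma,h_{r2},h_s>0$, both are negative if and only if $R_s<1$ and $R_r<1$, which proves item (1) as an equivalence.

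For $\textbf{P}_1=\left(0,\tfrac{R_r-1}{R_r}\right)$ the Jacobian is again lower triangular. Substituting $y=\tfrac{R_r-1}{R_r}$, the $(2,2)$ entry collapses to $-\gamma(R_r-1)$, negative exactly on the existence range $R_r>1$, while the $(1,1)$ entry simplifies to $\tfrac{\gamma h_{r2}}{h_s}\!\left[\tfrac{R_s+h_s}{R_r}-(1+h_s)\right]$, which is negative if and only if $R_r>\tfrac{R_s+h_s}{1+h_s}$. Together these give the two conditions of item (2).

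The interior equilibrium $\textbf{P}^*=(x^*,y^*)$ is the main obstacle, because there the off-diagonal entry no longer vanishes and the eigenvalues cannot be read off directly; a full Routh--Hurwitz computation is required. The crucial device is to use the two equilibrium identities satisfied at $\textbf{P}^*$ to strip the quadratic terms from the diagonal of $\textbf{J}$. The relation $R_s-1-R_sx^*-(R_s+h_s)y^*=0$ coming from $f_1=0$ reduces the $(1,1)$ entry to $-\tfrac{\gamma h_{r2}R_s x^*}{h_s}$, and dividing $f_2=0$ by $y^*$ reduces the $(2,2)$ entry to $-\gamma\!\left[R_r y^*+\tfrac{h_{r1}q\,x^*}{y^*}\right]$. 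Both are manifestly negative for $x^*,y^*>0$, so $\operatorname{tr}\textbf{J}(\textbf{P}^*)<0$ holds unconditionally once $\textbf{P}^*$ lies in the interior of $\Omega$.

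It then remains to check positivity of the determinant. Expanding $\det\textbf{J}(\textbf{P}^*)$ with the simplified diagonal and the original off-diagonal entries, I expect the terms linear in $y^*$ to recombine into the coefficient $a_2=h_{r2}(R_s+h_s)-h_sR_r$ from \eqref{element_quadratic_y}, yielding
\[
\det\textbf{J}(\textbf{P}^*)=\frac{\gamma^2 h_{r2}x^*}{h_s}\left[a_2\,y^*+\frac{R_s h_{r1}q\,x^*}{y^*}+(R_s+h_s)h_{r1}q\right].
\]
Because $\beta_s\ge\beta_r$ forces $a_2>0$ (as already used in the proof of Theorem \ref{teoexistencia}) and the remaining two terms are non-negative, the bracket is positive and hence $\det\textbf{J}(\textbf{P}^*)>0$. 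Consequently the stability of $\textbf{P}^*$ follows automatically from its existence, and the hypotheses $R_s>1$, $0<h_1<1$, $0<h_2<1$, $R_r<\tfrac{R_s+h_s}{h_s+1}$ of item (3) are precisely condition 1 of Theorem \ref{teoexistencia} guaranteeing that $\textbf{P}^*$ sits in the interior of $\Omega$. The step I expect to demand the most careful bookkeeping is this determinant reduction, where I must verify that the contribution of the off-diagonal product does not spoil the sign; the recombination into $a_2$ is what makes the whole expression transparently positive.
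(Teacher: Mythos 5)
Your proposal is correct and takes essentially the same approach as the paper's proof: triangular Jacobians read off at $\textbf{P}_0$ and $\textbf{P}_1$, and at $\textbf{P}^*$ the same use of the equilibrium identities to reduce the diagonal entries to $-\tfrac{\gamma h_{r2}}{h_s}R_s x^*$ and $-\gamma\left[h_{r1}q\tfrac{x^*}{y^*}+R_r y^*\right]$, followed by the same recombination of the determinant into $\tfrac{\gamma^2 h_{r2}}{h_s}x^*\left[R_s h_{r1}q\tfrac{x^*}{y^*}+(R_s+h_s)h_{r1}q+a_2 y^*\right]$ with $a_2>0$. As in your argument, the paper uses the hypothesis $R_r<\tfrac{R_s+h_s}{h_s+1}$ only to guarantee $y^*<y_{max}$, i.e., that $\textbf{P}^*$ lies in the interior of $\Omega$, so stability of the coexistence equilibrium follows from its existence.
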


\begin{proof}
\begin{enumerate}
%%%%%P0
\item The matrix $\textbf{J}$ evaluated in $\textbf{P}_0=(0, 0)$ is
\begin{equation}\label{Jacobian_p0}
    \textbf{J}(\textbf{P}_0)=\left(
                             \begin{array}{cc}
                            \dfrac{\gamma h_{r2}}{h_s}(R_s-1)   & 0 \\ \\
                              \gamma h_{r1}q  &\gamma(R_r-1)
                             \end{array}
                           \right),
\end{equation}
whose eigenvalues are $\lambda_1= \dfrac{\gamma h_{r2}}{h_s}(R_s-1)$ and $\lambda_2=\gamma(R_r-1)$.  Thus, the origin is \textit{LAS} if conditions $R_s<1$ and $R_r<1$ are satisfied. 
%%%% P1
\item $\textbf{J}$  evaluated in $\textbf{P}_1$ is given by

\begin{equation}\label{Jacobian_p1}
    \textbf{J}(\textbf{P}_1)=\left(
                             \begin{array}{cc}
                             \dfrac{\gamma h_{r2}}{h_s}\left(R_s-1-\dfrac{(R_s+h_s)(R_r-1)}{R_r}\right)   & 0 \\ \\
                              \gamma \left( h_{r1}q +\dfrac{(h_{r2}-R_r)(R_r-1)}{R_r}\right) &-\gamma (R_r-1)
                             \end{array}
                           \right).
\end{equation}

The eigenvalues of $\textbf{J}(\textbf{P}_1)$ are $\lambda_1 =\dfrac{\gamma h_{r2}}{h_s}\left(R_s-1-\dfrac{(R_s+h_s)(R_r-1)}{R_r}\right)$ and $\lambda_2=-\gamma (R_r-1)$.  Note that $\lambda_2<0$ if $R_r>1$.
Additionally, $\lambda_1<0$  if
$$
\begin{array}{cl}
 &R_s-1-\dfrac{(R_s+h_s)(R_r-1)}{R_r}<0 \\
 \Leftrightarrow  &\dfrac{R_s-1}{R_s+h_s}<\dfrac{R_r-1}{R_r} \\
  \Leftrightarrow &y_{max}<\dfrac{R_r-1}{R_r} \\
  \Leftrightarrow &R_r>\dfrac{1}{1-y_{max}},
\end{array}
$$
where $y_{max}$ is given in \eqref{ymax}.
Thus, equilibrium $\textbf{P}_1$ is \textit{LAS} if $R_r>1$, $R_s>1$ and $R_r>\dfrac{1}{1-y_{max}}$.
%%%%P*
\item To determine conditions for the stability of $\mathbf{P^*}$,  let us  denote $J_{ij}$ to the $(i,j)$-entry in \eqref{Jacobian_general}. From the first equation of \eqref{eq_equilibria3},  we have $R_s-1-R_sx-(R_s + h_s)y = 0$.  Thus, 
\begin{equation}\label{j11}
 J_{11}(\mathbf {P})  =\frac{\gamma h_{r2}}{h_s}[R_s-1-2R_s x^{*}-(R_s+h_s)y^{*}] 
            =-\frac{\gamma h_{r2}}{h_s}R_s x.
\end{equation}
From the second equation of  \eqref{eq_equilibria3}, we have $(h_{r2}-R_r)x=R_r y -(R_r-1)-h_{r1}q\frac{x}{y}$.  Substituting this value in $J_{22}$ we obtain

\begin{equation}\label{J22}
 J_{22}(P)  =\gamma[R_r-1-(R_r-h_{r2})x-2R_ry]
            = -\gamma[h_{r1}q\frac{x}{y}+R_ry].
\end{equation}

Replacing  \eqref{j11} and \eqref{J22} in \eqref{Jacobian_general},  it can be re-written as
\begin{equation}\label{Jacobian_general_con_mod}
    \textbf{J}(\textbf{P})=\left(
                             \begin{array}{cc}
                             -\dfrac{\gamma h_{r2}}{h_s}R_sx & -\dfrac{\gamma h_{r2}}{h_s}(R_s+h_s)x  \\ \\
                              \gamma[h_{r1}q+(h_{r2}-R_r)y]  &-\gamma[h_{r1}q\frac{x}{y}+R_ry]
                             \end{array}
                           \right).
\end{equation}
For $0<h_2<1$ this Jacobian matrix $\textbf{J}$ evaluated in $\textbf{P}^*=(x^*,y^*)$ stated in Theorem~\ref{teoexistencia}
is:
\begin{equation}\label{Jacobian_general1}
\textbf{J}(\textbf{P}^*)=\left(
                             \begin{array}{cc}
                             -\dfrac{\gamma h_{r2}}{h_s}R_sx^{*} & -\dfrac{\gamma h_{r2}}{h_s}(R_s+h_s)x^{*}  \\ \\
                              \gamma[h_{r1}q+(h_{r2}-R_r)y^{*}]  &-\gamma\left[h_{r1}q\dfrac{x^{*}}{y^{*}}+R_ry^{*}\right]
                             \end{array}
                           \right).  
\end{equation}  

The characteristic polynomial of $\textbf{J}(\textbf{P}^*)$ is given by
$$|J(P)-I\lambda|=\lambda^2-\tau\lambda+\delta,$$
where
\begin{align*}
    &\tau=Trace= J_{11}(\textbf{P}^*)+J_{22}(\textbf{P}^*),\\
    &\delta =Determinant= J_{11}(\textbf{P}^*)J_{22}(\textbf{P}^*)-J_{12}(\textbf{P}^*)J_{21}(\textbf{P}^*).
\end{align*}

In order to satisfy the trace-determinant criterion and guarantee that the eigenvalues of the characteristic equation have negative real parts, it is necessary for $\tau$ to be negative and for $\delta$ to be positive. Therefore, 
for $y<y_{max}$ to be true, we need $R_r(h_s +1)<R_s +h_s $.
Also, since $x= \dfrac{R_s - 1}{R_s}-\dfrac{R_s + h_s}{R_s}y$ in \eqref{2.14},  we have $y= \dfrac{R_s -1 - x R_s}{R_s + h_s}$. Hence we substitute these expressions at $(x^*,y^*)$ to obtain
\begin{align*}
    \tau= -\frac{\gamma h_{r2}}{h_s}R_sx^{*} -\gamma\left[h_{r1}q\frac{x^{*}}{y^{*}}+R_ry^{*}\right]=-\left(\frac{\gamma h_{r2}}{h_s}R_sx^{*}+ \gamma\left[h_{r1}q\frac{x^{*}}{y^{*}}+R_ry^{*}\right]\right)<0.
\end{align*}

Additionally, 
\begin{align*}
    \delta &= \left(-\dfrac{\gamma h_{r2}}{h_s}R_sx^{*}\right) \left(-\gamma\left[h_{r1}q\frac{x^{*}}{y^{*}}+R_ry^{*}\right]\right)-\left(-\dfrac{\gamma h_{r2}}{h_s}(R_s+h_s)x^{*}\right)  \left(\gamma[h_{r1}q+(h_{r2}-R_r)y^{*})] \right)\\
    &=\dfrac{\gamma^2 h_{r2}}{h_s}x^{*}\left(R_s\left[h_{r1}q\frac{x^{*}}{y^{*}}+R_ry^{*}\right]+(R_s+h_s)[h_{r1}q+(h_{r2}-R_r)y^{*})]\right)
    \\
    &=\dfrac{\gamma^2 h_{r2}}{h_s}x^{*}\left(R_sh_{r1}q\frac{x^{*}}{y^{*}}+(R_s+h_s)h_{r1}q+h_s(h_{r2}-R_r)y^{*}+R_s h_{r2}y^{*}\right)
    \\
    &=\dfrac{\gamma^2 h_{r2}}{h_s}x^{*}\left(R_sh_{r1}q\frac{x^{*}}{y^{*}}+(R_s+h_s)h_{r1}q+a_{2}y^{*}\right)>0.
\end{align*}
This completes the proof. 
\end{enumerate}
\end{proof}

The ($R_s,R_r$) plane is subdivided based on the local stability conditions provided by Theorem~\ref{teo_stability}, as depicted in Figure \ref{regiones}. Five distinct open regions with different qualitative dynamics are identified and further elaborated in Table \ref{tableregiones}, which summarizes the results presented in Theorem~\ref{teo_stability}.

\begin{figure}[h]
    \centering
   \includegraphics[width=8.1 cm, height=6cm]{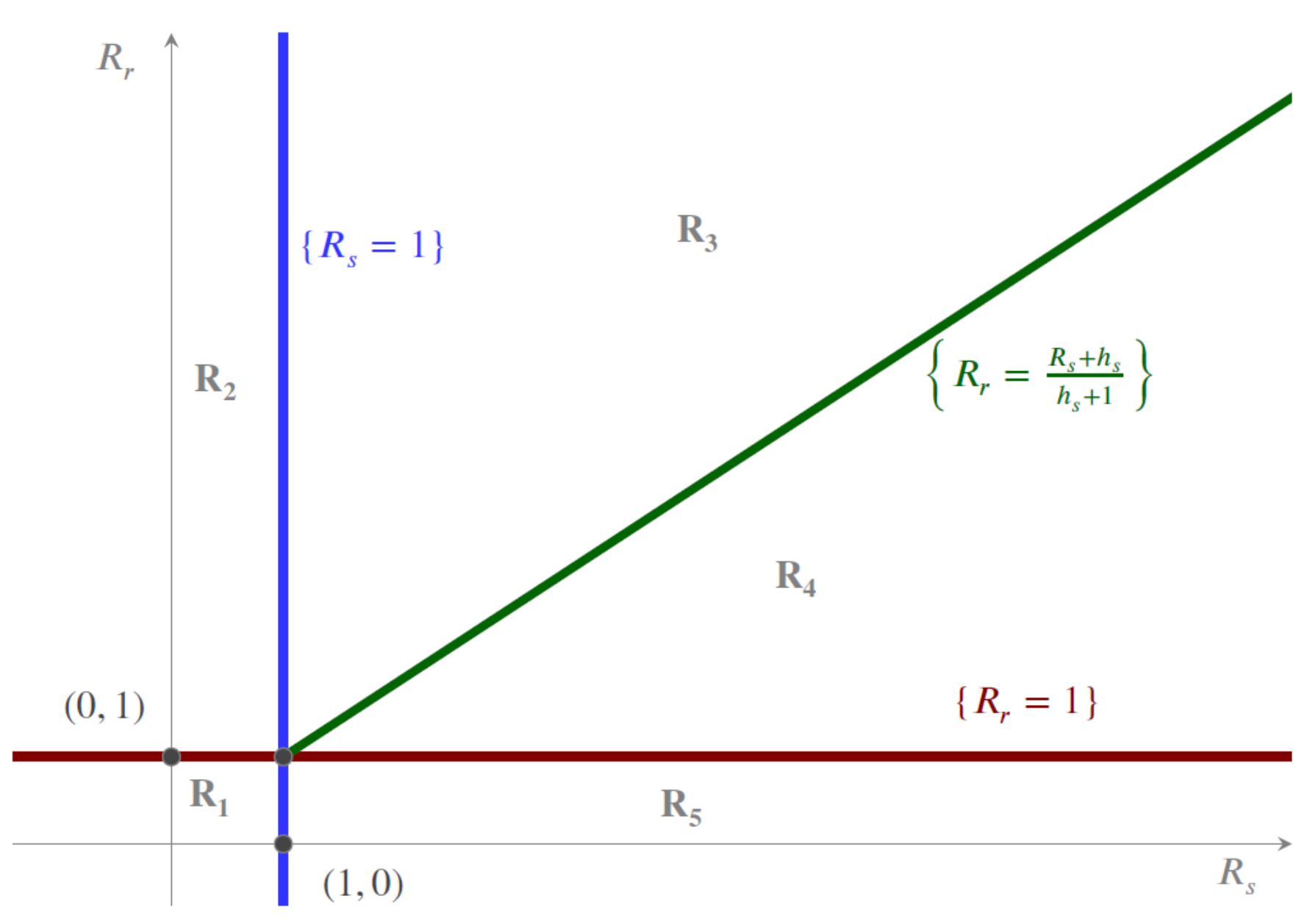}
    \caption{The stability regions of the equilibrium points $\textbf{P}_0$, $\textbf{P}_1$, and $\textbf{P}^*$ are determined in accordance with Theorem~\ref{teo_stability}.}
    \label{regiones}
\end{figure}

%%%%%

\begin{table}[h]
\centering
	\caption{
The stability regions for the equilibrium points of System \eqref{model_res2} are determined following Theorem~\ref{teo_stability} and Figure~\ref{regiones}. In this context, \textit{LAS} stands for \textit{locally asymptotically stable}.}
\begin{tabular*}{\tblwidth}{@{}LLLLLL@{}}
\toprule
Equilibrium    & $\mathbf{R_1}$ &  $\mathbf{R_2}$ & $\mathbf{R_3}$ & $\mathbf{R_4}$ & $\mathbf{R_5}$   \\
\midrule
$\mathbf{P_0}$  &       \textit{LAS}  & Unstable & Unstable &  Unstable & Unstable               \\
$\mathbf{P_1}$  &     Does not exist &   \textit{LAS} &  \textit{LAS} & Unstable   & Does not exist                         \\
$\mathbf{P^*}$ &        Does not exist & Does not exist & Does not exist &  \textit{LAS} &  \textit{LAS}                   \\
                    \bottomrule
	\end{tabular*}
 \label{tableregiones}
\end{table} 

%%%%%%%%%
To visually  the stability and existence conditions of the equilibrium points within each region represented in Figure \ref{regiones}, we employ the parameter values specified in Table \ref{tablefixed} (refer to Section \ref{sec:case}). Specifically, we present the phase portraits of System \eqref{model_res2} for each equilibrium point in Figure \ref{Phase}.

\begin{figure}[h]
    \centering
    \subfigure[ $ \mathbf{P}_{0}$ is \textit{LAS} ]{
    \includegraphics[scale=0.23]{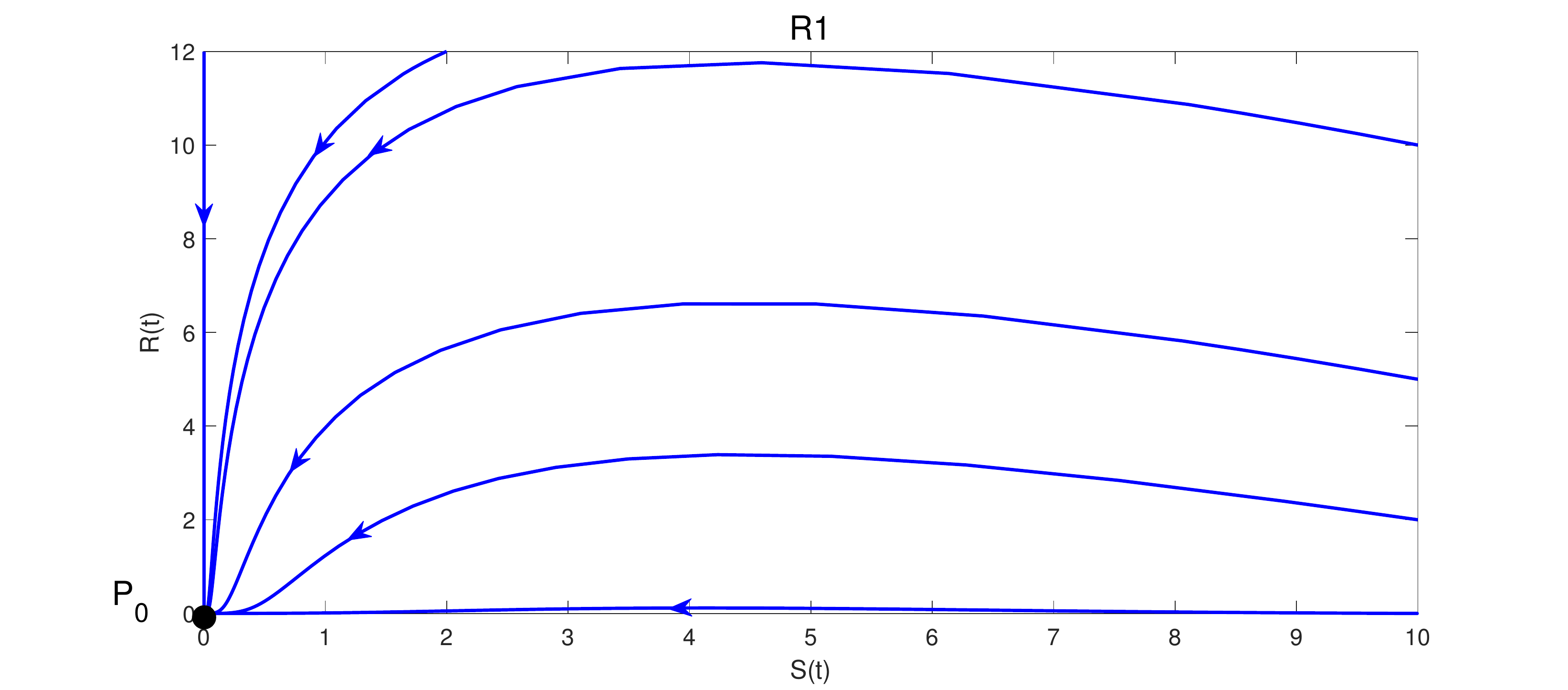}}
    \subfigure[$\mathbf{P_{1}}$ is \textit{LAS} and $\mathbf{P_{0}}$ is a saddle point]{ \includegraphics[scale=0.23]{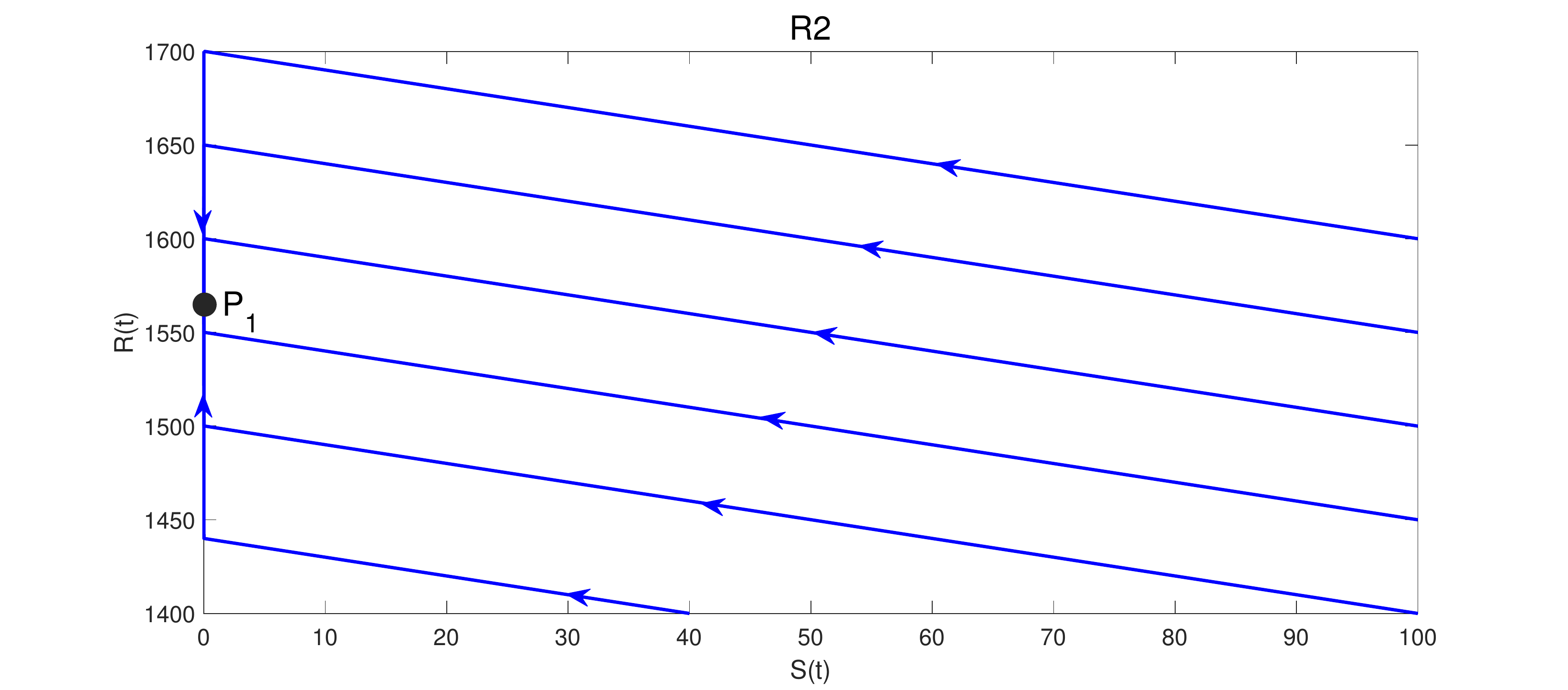}}
    \subfigure[$\mathbf{P_{1}}$ is \textit{LAS} and $\mathbf{P_{0}}$ is unstable]{\includegraphics[scale=0.23]{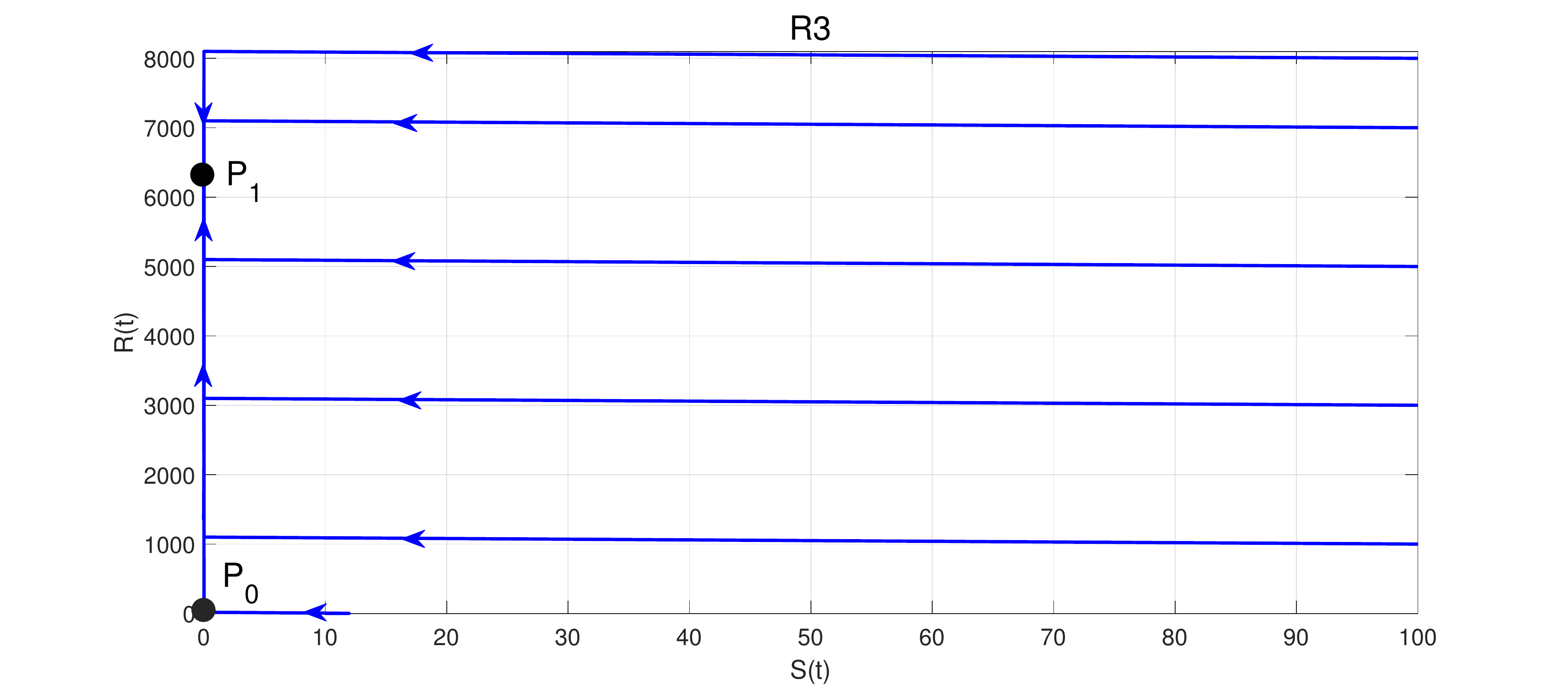}}
    \subfigure[ $\mathbf{P^{*}}$ is\textit{ LAS} and $\mathbf{P_1}$ and  $\mathbf{P_{0}}$ are unstable]{   \includegraphics[scale=0.2]{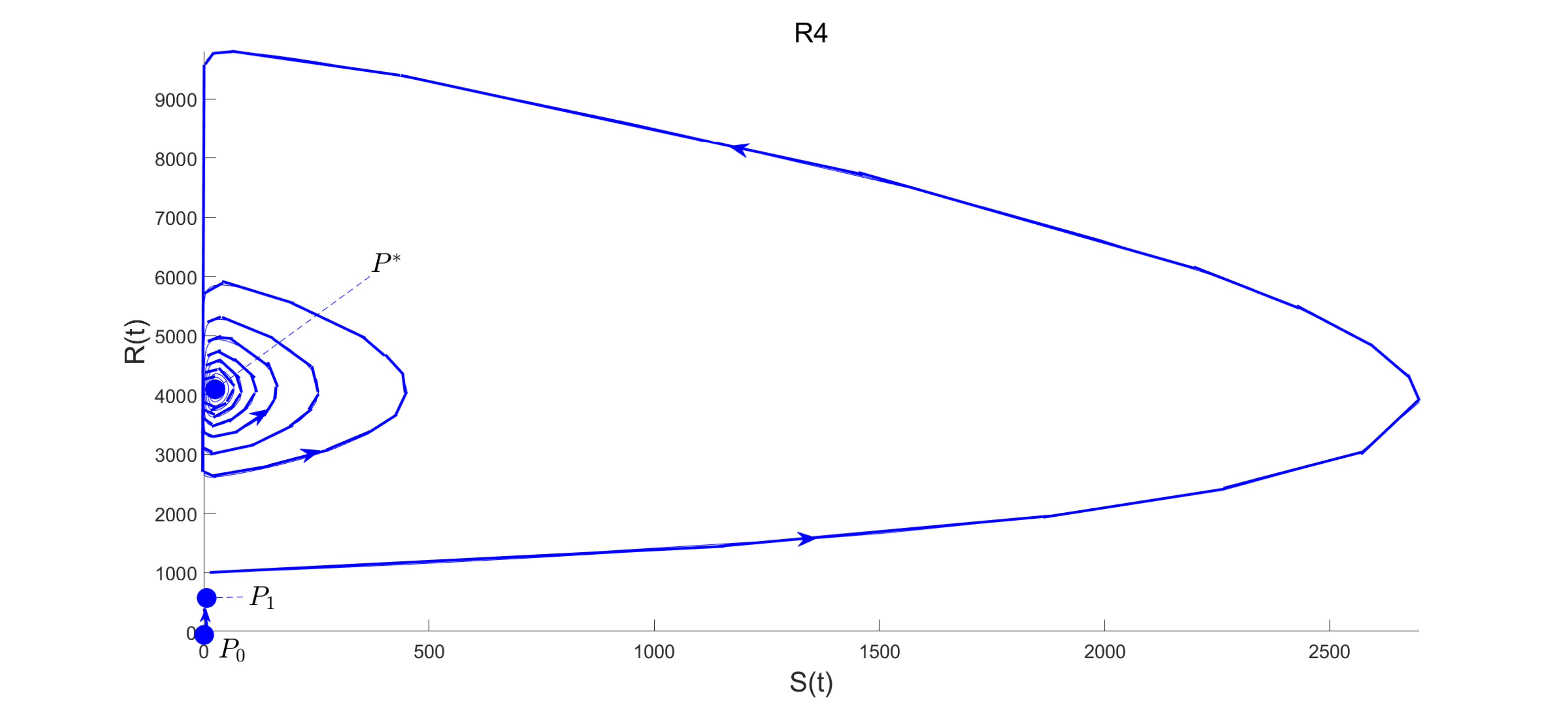}}
    \subfigure[$\mathbf{P_{*}}$ is \textit{LAS} and $\mathbf{P_{0}}$ is unstable]{\includegraphics[scale=0.2]{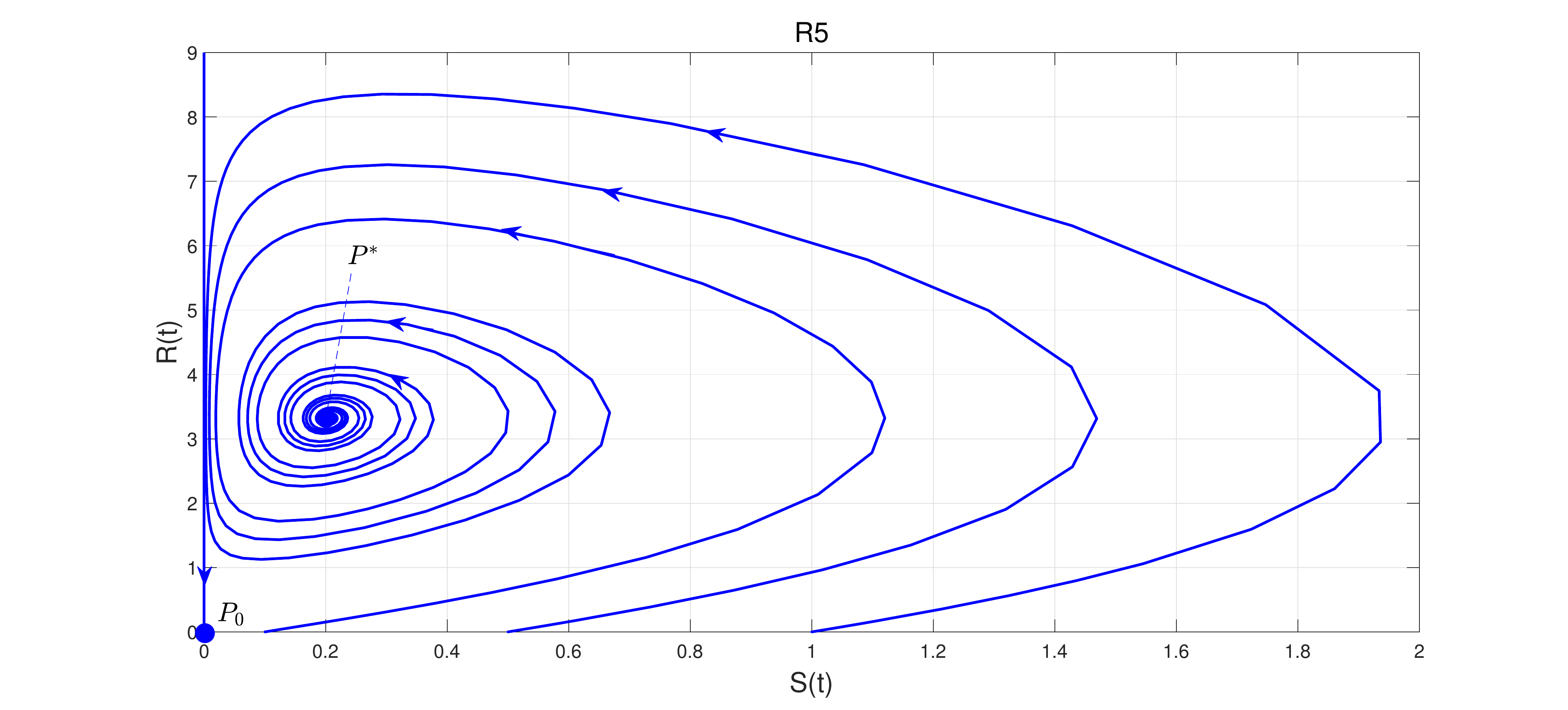}}
    \caption{Phase portraits of Model \eqref{model_res2}. (a) Dynamics in the  region $ \mathbf{R_{1}} $ where $ \mathbf{P}_{0}$ is \textit{LAS}. Here the threshold's values are:
$R_s=0.17$, $h_s=0.22$, $h_{r2}=8.33$, $R_r=0.53$, $h_{r1}=4166.66$.  The values of the mutation parameters and immune response  are $q=0.00098$, and $\gamma=0.00012$, respectively. (b) Dynamics in $\mathbf{R_{2}}$ where $\mathbf{P_{1}}$ is \textit{LAS} and $\mathbf{P_{0}}$ is a saddle point. Here,
$R_s=0.99$, $h_s=12.41$, $h_{r2}=158.73$, $R_r=1.01$, $h_{r1}=79365.07$. And $q=1.48e-5$  $\gamma=6.3e-6$. (c) Dynamics in  $\mathbf{R_{3}}$ where $\mathbf{P_{1}}$ is \textit{LAS} and $\mathbf{P_{0}}$  unstable. Here,
$R_s=1.01$, $h_s=0.12$, $h_{r2}=1.66$, $R_r=1.06$, $h_{r1}=150000$. And, $q=1.47e-5$,  $\gamma=6e-6$. D. Dynamics in   $\mathbf{R_{4}}$ where $\mathbf{P^{*}}$ is \textit{LAS}, and $\mathbf{P_1}$ and  $\mathbf{P_{0}}$ are unstable. Here,  $R_s=1.21$, $h_s=15.15$, $h_{r2}=157.38$, $R_r=1.007$, $h_{r1}=78690.58$. And  $q=1.47e-5$, $\gamma=6.35e-6$. (e). Dynamics in  $\mathbf{R_{5}}$ where $\mathbf{P_{*}}$ is \textit{LAS} and $\mathbf{P_{0}}$ is unstable.  Here, $R_s=1.05$, $h_s=1.31$, $h_{r2}=11.11$, $R_r=0.71$, $h_{r1}=5555.55$. And  $q=0.00014$,   $\gamma=9e-5$.} \label{Phase}
  \end{figure}
   
%%%%%%%%%
\section{The optimal control analysis} \label{sec:control}
 
This section discusses the optimal control problem formulated in Model \eqref{model}. We first analyze the set-up and  show the existence of the controls. Here, $h_1(t)$ and $h_2(t)$ are time-dependent. We assume an initial time $t_0=0$, a fixed final time $T>0$ ---which represents the implementation time of the control strategies--- and we assume that $\textbf{X}_1$ is a free dynamic variable at the end time. In contrast, the coordinates of the initial condition $\textbf{X}_0$ are the coordinates of a non-trivial equilibrium of Model~\eqref{model}.
\par
First, we check the existence and uniqueness of controls $h_1$ and $h_2$. To do this, using arguments similar to~\cite{PTU18}, we will verify that the following properties are fulfilled:  

\begin{enumerate}
	\item[(i)] The set of all solutions of \eqref{model} with the corresponding control functions in
$\mathcal{U}$ is not empty.
	\item [(ii)] The right-hand side of \eqref{model} is continuous,  bounded above by a bounded sum
control and state, and it can be written as a linear function of the control variables with coefficients
that depend on time and state variables.
\item[(iii)] The integrand in the cost function $L_{S,R}(h_1, h_2)=cR+(w_1+w_2h_1)h_1+(b_1+b_2h_2)h_2 $ is convex in $\mathcal{U}$ and also satisfies
\[cR+(w_1+w_2h_1)h_1+(b_1+b_2h_2)h_2  \geq k_1 |\textbf{h}|^\delta - k_2, \ \text{where} \ \; k_1 , k_2 > 0,\  \delta > 1.
	\]
\end{enumerate}
  
Assumption (i) is confirmed by Theorem 1 in \cite{Revista_inv}. By rewriting the right-hand side of Model \eqref{model} as $\dot{\textbf{X}} = f(\textbf{X}) + g(\textbf{X}) \textbf{h}$, Assumption (ii) follows immediately. Additionally, the integrand in the cost function is obviously convex because the Hessian matrix of $L_{S,R}(h_1, h_2)$ over the set of admissible controls is 
$$M=\begin{pmatrix}
	w_2& 0 \\
	0 & b_2
	\end{pmatrix}.$$
	
 Since 
$Sp(M)=\{w_2,b_2\} \subset \mathbb{R}_{+}^{*}$, then $L_{S,R}(h_1, h_2)$ is strictly convex in $\mathcal{U}$. By letting $k_2=a$ and $k_1=\min\{ w_2, b_2\}$, the integrand in the cost function can be written as
\begin{equation*}
cR+(w_1+w_2h_1)h_1+(b_1+b_2h_2)h_2 \geq -cR+w_2h_1^2+b_2h_2 2 \geq -k_2+k_1 |\textbf{h}|^2,
\end{equation*}
which satisfies Assumption (iii).
\par 

To determine the optimal controls for Problem \eqref{model}, we employ Pontryagin's principle for bounded controls, as described in \cite{PBGM62}. The corresponding Hamiltonian is then given by:

\begin{equation}\label{hamiltonian}
\begin{array}{rcl}
H&=&cR+(w_1+w_2h_1)h_1+(b_1+b_2h_2)h_2 \\ \\
 &&+\left[ \beta_SS\left(1-\dfrac{S+R}{K}\right)-(\bar\alpha\Lambda+\bar \gamma)S-(1-h_1(t))\bar q\Lambda S-(1-h_2(t))aRS  \right]\lambda_1 \\ \\
 & &+\left[\beta_RR\left(1-\dfrac{S+R}{K}  \right)+(1-h_1(t))\bar q\Lambda S +(1-h_2(t))aRS-\bar\gamma R  \right] \lambda_2,
\end{array}
\end{equation}

where the vector $\bm \lambda=(\lambda_1, \lambda_2)$ is the set of \textit{adjoint variables} that determine the adjoint system associated with Problem \eqref{model}. The optimal system is defined by the adjoint system and state equations. The following result can be established.

\begin{theorem}\label{thm:teocontrol}
	For Problem \eqref{model}, there exists a corresponding optimal solution $(S^*(t), R^*(t))$ that minimize $J(\textbf{h})$ in $[0, T]$. Moreover, there exits an adjoint function $\bm\lambda(t)=(\lambda_1(t), \lambda_2(t))$ that satisfies the following adjoint system:
\begin{equation}\label{adjoint_system}
	\left\{\begin{array}{ll}
	% \nonumber to remove numbering (before each equation)
	 \dot{\lambda}_1 =& -\beta_S\lambda_1+\dfrac{\beta_S}{K}(2S+R)\lambda_1+(\bar\alpha\Lambda+\bar\gamma)\lambda_1-(1-h_1)\bar q\Lambda(\lambda_2-\lambda_1) \\ \\
                     &-(1-h_2)aR(\lambda_2-\lambda_1)+\dfrac{\beta_RR}{K}\lambda_2,  \\ \\
	\dot{\lambda}_2 =&  -c-\beta_R\lambda_2+\dfrac{\beta_S S}{K}\lambda_1 +\dfrac{\beta_R}{K}(S+2R)\lambda_2-(1-h_2)aS(\lambda_2-\lambda_1)+\bar\gamma\lambda_2,
	\end{array}\right.
\end{equation}
\noindent
with transversality condition $\lambda_i(t)=0$ for $i=1,2$, such that the optimal controls satisfy:
\begin{align}\label{optimal_controls}
h_1^*&=\min \left\{\max\left\{0, \frac{-w_1-\bar q\Lambda S(\lambda_1-\lambda_2)}{2w_2}  \right\} , 1 \right\}, \\ \nonumber
h_2^*&=\min \left\{\max\left\{0, \frac{-b_1-aRS(\lambda_1-\lambda_2)}{2b_2}  \right\} , 1 \right\}.
\end{align}
\end{theorem}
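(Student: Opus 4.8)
The plan is to separate the two assertions of the theorem---\emph{existence} of a minimizing pair and the \emph{necessary conditions} that characterize it---and treat each with the standard machinery of optimal control. For existence, I would observe that properties (i)--(iii), already verified above, are exactly the hypotheses required by the classical Filippov--Cesari existence theorem: (i) gives a nonempty admissible class, (ii) supplies the affine-in-control structure $\dot{\textbf{X}}=f(\textbf{X})+g(\textbf{X})\textbf{h}$ together with the needed growth bound, and the strict convexity plus the coercivity estimate $L_{S,R}\ge k_1|\textbf{h}|^2-k_2$ in (iii) prevents minimizing sequences from either oscillating or escaping to infinity. Invoking that theorem yields an optimal state $(S^*,R^*)$ with an associated control $\textbf{h}^*\in\mathcal{U}$.

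For the characterization I would apply Pontryagin's principle for bounded controls \cite{PBGM62} to the Hamiltonian $H$ of \eqref{hamiltonian}. The adjoint equations come from the costate relations $\dot\lambda_1=-\partial H/\partial S$ and $\dot\lambda_2=-\partial H/\partial R$; differentiating the two bracketed drift terms of $H$ with respect to $S$ and to $R$ and then grouping the coefficients of $\lambda_1$ and $\lambda_2$ reproduces \eqref{adjoint_system}. The differences $(\lambda_2-\lambda_1)$ appearing there are a structural consequence of the fact that each mutation and HGT flux leaves the $S$-equation and enters the $R$-equation with opposite sign. Because the terminal state is taken to be free---the assumption adopted at the start of this section, which supersedes the fixed terminal constraint $\textbf{X}(T)=\textbf{X}_1$ written in \eqref{model}---the transversality conditions collapse to $\lambda_i(T)=0$ for $i=1,2$. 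Since the states remain in the bounded region $\Omega$ and the controls lie in $[0,1]$, the adjoint system is linear in $\bm\lambda$ with bounded measurable coefficients, and hence admits a unique solution on $[0,T]$ meeting these terminal data.

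To extract the control formulas I would use the pointwise minimization of $H$ over $\mathcal{U}$ along the optimum. As recorded in the verification of (iii), $H$ is strictly convex in each control, with $\partial^2 H/\partial h_1^2=2w_2>0$ and $\partial^2 H/\partial h_2^2=2b_2>0$, so the unconstrained stationary points obtained from $\partial H/\partial h_1=0$ and $\partial H/\partial h_2=0$ are the global minimizers over $\mathbb{R}$. Solving these two linear equations gives $h_1=\bigl(-w_1-\bar q\Lambda S(\lambda_1-\lambda_2)\bigr)/(2w_2)$ and the analogous expression for $h_2$; projecting each onto the admissible interval $[0,1]$ then produces the $\min$--$\max$ clamp in \eqref{optimal_controls}.

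The differentiations leading to \eqref{adjoint_system} and \eqref{optimal_controls} are essentially mechanical, so the only genuinely structural ingredient is the verification (already carried out in (i)--(iii)) that the existence theorem applies. The step I expect to require the most care is maintaining consistent sign conventions for the \emph{minimum} principle throughout---both when writing the costate equations and, above all, in the projection step, where it is precisely the strict convexity of $H$ in the controls that guarantees the clamped stationary point is the true constrained minimizer over $[0,1]$ rather than a spurious critical point.
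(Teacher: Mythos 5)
Your proposal is correct and follows essentially the same route as the paper: existence via the standard conditions (i)--(iii) verified before the theorem, then Pontryagin's principle to obtain the costate equations $\dot\lambda_i=-\partial H/\partial(\text{state})$, the transversality conditions $\lambda_i(T)=0$, and the clamped stationary-point formulas for $h_1^*,h_2^*$. If anything, your treatment is more careful than the paper's on two points the paper leaves implicit: the free-terminal-state assumption (which supersedes the constraint $\textbf{X}(T)=\textbf{X}_1$ written in \eqref{model} and is what justifies the transversality conditions), and the use of strict convexity of $H$ in the controls to guarantee that the projected stationary point is the true constrained minimizer, where the paper's appendix instead writes the condition as $H=\max_{h_i\in\mathcal{U}}H$ despite the problem being one of minimization.
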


The details of the proof can be located in Appendix \ref{appendix:A3}.

%%%%%%%
\section{Numerical simulations}\label{sec:case}
This section presents a case study that examines the population growth of \textit{E. coli} using Model \eqref{model}. \textit{E. coli} is a bacterium that typically inhabits the intestines of warm-blooded organisms. Fresh fecal matter can grow quickly but gradually decreases over time \cite{berry2005cattle}. Fecal-oral transmission is the primary route by which pathogenic strains of \textit{E. coli} cause disease. Various antimicrobials have been used to combat \textit{E. coli}, each with varying levels of effectiveness. For instance, Ampicillin, which is widely prescribed in Europe and Latin America \cite{mortazavi2019pattern}. The pathogen is highly resistant to ampicillin. Amikocin, gentamicin, or tobramycin were found to be the most effective drugs \cite{rahal1976bactericidal}. This information is essential to the case study, as it guides the selection of parameters for the mathematical model.
\par 
According to \cite{riano2022contribution}, the percentage of resistance of {\it E coli} and percentage of MDR in {\it E coli} vary in European countries, based on their regions. These regions can be categorized into three clusters: Southern European countries, Central European countries, and Northern European countries. In colder countries, the rate of elimination of susceptible bacteria by antibiotics is high, whereas antibiotics are less effective in warmer and more southerly countries \cite{pepi2021antibiotic}. The varying rates of AMR between cold and warm countries can be attributed to several factors 
related to environmental conditions, healthcare practices, and microbial behaviour. Firstly, cold countries often experience prolonged winters with low temperatures. This cold environment can create favourable conditions for the survival and persistence of specific bacterial strains \cite{hance2007impact}. Secondly, cold countries typically have robust healthcare infrastructures, including well-established infection control measures and stringent antimicrobial prescribing guidelines. In contrast, some warm countries may need help implementing effective infection control practices, ensuring proper antimicrobial stewardship, and maintaining adequate healthcare infrastructure \cite{oaks1992emerging}. Thirdly, in warm countries, agricultural activities like livestock farming may involve using antimicrobial as growth promoters or preventive measures. This agricultural use of antimicrobial can contribute to selecting and disseminating resistant pathogen in the environment, food chain, and communities \cite{oaks1992emerging}. Finally, microbes can adapt and evolve differently in cold and warm environments. Cold-adapted microbes may possess unique genetic traits that enable them to thrive in cold conditions and resist antimicrobial agents \cite{wani2022microbial}. 
\par 
We utilized the data from \cite{european2010european} to calculate the average percentages of resistance and MDR of {\it E. coli} to  gentamicin and amoxicillin in \textit{E. coli}, as part of our numerical experiments. According to those data,  amoxicillin (from the aminopenicillin family) is considered a less effective antibiotic for treating {\it E. coli} infections compared to gentamicin (from the aminoglycosides family) in all regions of Europe \cite{darras1996synergy}. In fact, some studies have provided evidence supporting the effectiveness of gentamicin over amoxicillin in the treatment of infections caused by \textit{E. coli} \cite{olson2002biofilm, olorunmola2013antibiotic}.
\par 
Using data found in \cite{european2010european} and \cite{riano2022contribution}, certain parameter values were assumed constant throughout the experiment, including the fixed control parameters which operated at 50\% efficiency ($h_1$=$h_2$=0.5). These specific values can be found in Table \ref{tablefixed}. On the other hand, the remaining parameters such as the elimination rate of sensitive bacteria ($\bar \alpha$), the mutation rate ($\bar q$), the administration rate of antibiotics ($\Lambda$), and the elimination rate of antimicrobial by immune system ($\bar \gamma$) were considered as variables, dependent on both the geographical zone (North, Center, or South of Europe) and the type of antibiotic used (amoxicillin or gentamicin). Detailed information regarding these parameter values can be found in Tables \ref{table-region} (for $\bar \alpha$ and $\bar q$ variations), Table \ref{table-Lambda} (for $\Lambda$ variation), and Table \ref{table-gamma} (for $\bar \gamma$ variation).
\par 
Consequently, our numerical experiments aimed to illustrate three distinct scenarios. The first scenario involved examining the impact of varying $\bar \alpha$ and $\bar q$ for both amoxicillin and gentamicin across different geographical zones in Europe. The second and third scenario focused on to analyze in depth the most critical scenario (supply of amoxicillin in southern European countries).   Particularly, the second scenario to explore the effects of varying $\Lambda$, and the third scenario involved investigating the influence of varying $\bar \gamma$.
Towards the conclusion of this section, we conduct numerical experiments involving variable controls ($h_1$ and $h_2$).

%%%%Table fixed parameters
\begin{table}[h]
\centering
	\caption{Fixed parameter values of Model (\ref{model}). Time in hours.}%%%Table caption goes
\begin{tabular*}{\tblwidth}{@{}LLL@{}}
\toprule
Parameter   & Value &Reference    \\
\midrule
$\beta_S$  &    8   &\cite{european2010european}, \cite{riano2022contribution}                \\
$\beta_R$  &   0.64   &\cite{european2010european}, \cite{riano2022contribution}                          \\
$h_1$ &  0.5 &Assumed  \\
$h_2$ &  0.5 &Assumed\\
$a$        &  1     &Assumed          \\
$K$        & 1e5                         &\cite{european2010european}, \cite{riano2022contribution}  \\
		\bottomrule
	\end{tabular*}
 \label{tablefixed}
\end{table}

%%%%%%%table variation of alpha y q 

\begin{table}[h]
\centering
	\caption{Values of the elimination of sensitive bacteria and the mutation rates for contact with amoxicillin and gentamicin according to  the geographical zone of Europe. Time is given in hours. These data were taken from \cite{european2010european}.}%%%Table caption goes
	\begin{tabular*}{\tblwidth}{@{}LLLLLLLL@{}}
			\toprule	
Parameter & \multicolumn{2}{c}{North Europe} & \multicolumn{2}{c}{Center Europe} & \multicolumn{2}{c}{South Europe} \\\hline
          & Amoxicillin  & Gentamicin  & Amoxicillin   & Gentamicin  & Amoxicillin  & Gentamicin  \\ \hline
$\bar \alpha$     & 0.56        & 0.92        & 0.4          & 0.88        & 0.36        & 0.79        \\
$\bar q$         & 0.44        & 0.088        & 0.60         & 0.12        & 0.64        & 0.21     \\
	\bottomrule
	\end{tabular*}
 \label{table-region}
\end{table}

%%%%%table variation of Lambda
\begin{table}[h]
\centering
	\caption{Values of the administration rate of gentamicin in the North of Europe. Time is given in hours. These data were taken from \cite{european2010european}.}
%%%Table caption goes
	\begin{tabular*}{\tblwidth}{@{}LLLLL@{}}
			\toprule	
Parameter   &Low & Standard& High\\
\midrule   
$\Lambda$    & 12  & 8 &4                   \\
	\bottomrule
	\end{tabular*}
 \label{table-Lambda}
\end{table}

\begin{table}[h]
\centering
	\caption{
	Values of the elimination rate of amoxicillin by the immune system in the South of Europe. RISH represents  robust immune system host; CISH represents compromised immune system host; and  SCISH represents  immune system significantly compromised host. Time is given in hours. These data were taken from \cite{european2010european}. }%%%Table caption goes
	\begin{tabular*}{\tblwidth}{@{}LLLLL@{}}
			\toprule	
Parameter  & RISH & CISH & SCISH\\
\midrule    
$\bar\gamma$  & 2.4 & 1.5 & 0.9                     \\
	\bottomrule
	\end{tabular*}
 \label{table-gamma}
\end{table}

%%%%%%variation of geographical zone alpha y q 

Figure \ref{fig:var_alpha_q} was generated through numerical simulations, following the administration of gentamicin and amoxicillin with parameters $\Lambda=8$ and $\bar \gamma=2.4$, using an initial condition of (1,0). The figure depicts the results of these simulations for sensitive and resistant bacteria, considering different values of the sensitive bacteria elimination rate ($\bar \alpha$) and the mutation rate ($\bar q$) in the three considered geographical regions of Europe. Here, it becomes apparent that the administration of amoxicillin results in higher densities of both bacterial populations compared to gentamicin. Furthermore, a noticeable disparity emerges among the three regions of Europe, with southern and central countries being more heavily impacted in terms of the prevalence of resistant bacteria, in contrast to northern countries.

\begin{figure}[h]
    \centering
  \subfigure[Sensitive bacteria treated with gentamicin] {\includegraphics[scale=0.2]{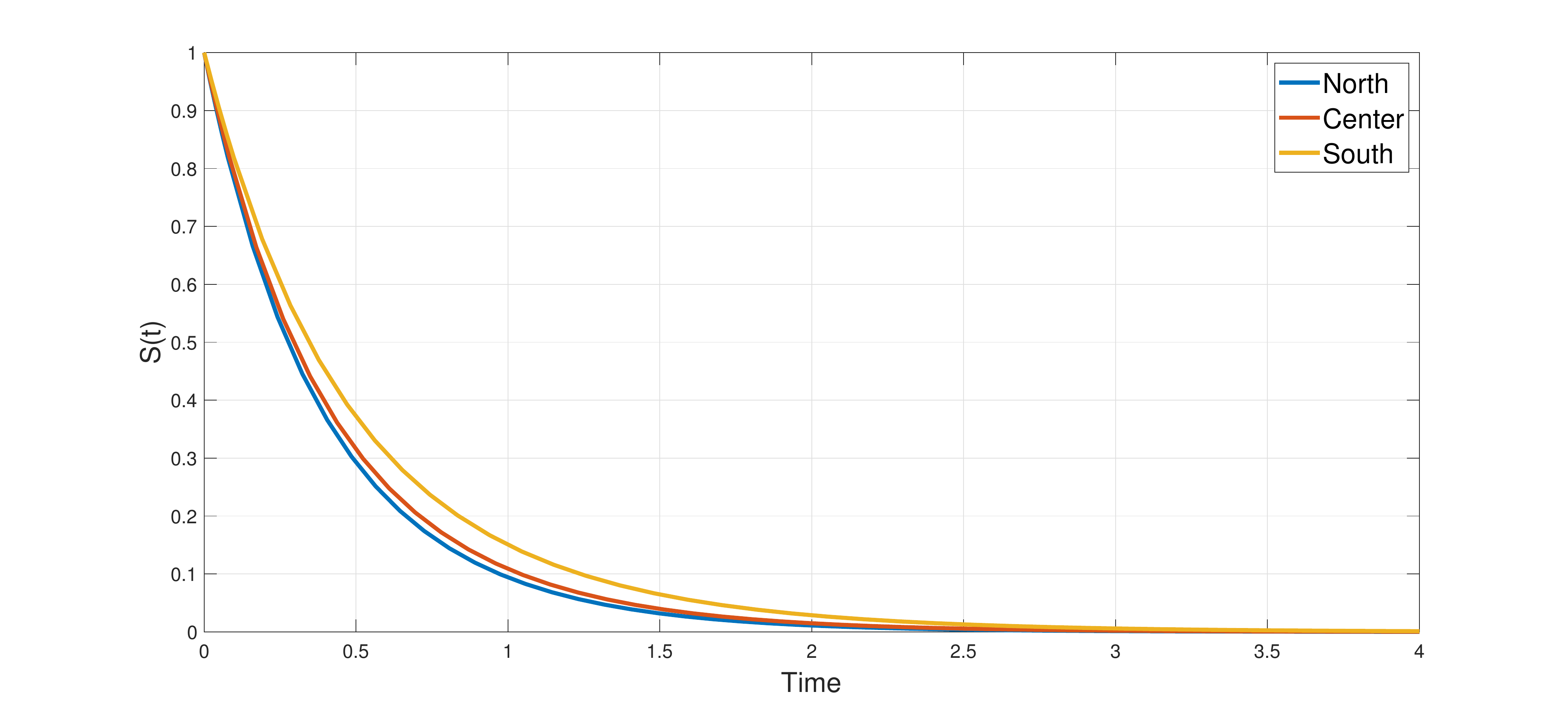}}
  \subfigure[Resistant bacteria treated with gentamicin] {\includegraphics[scale=0.2]{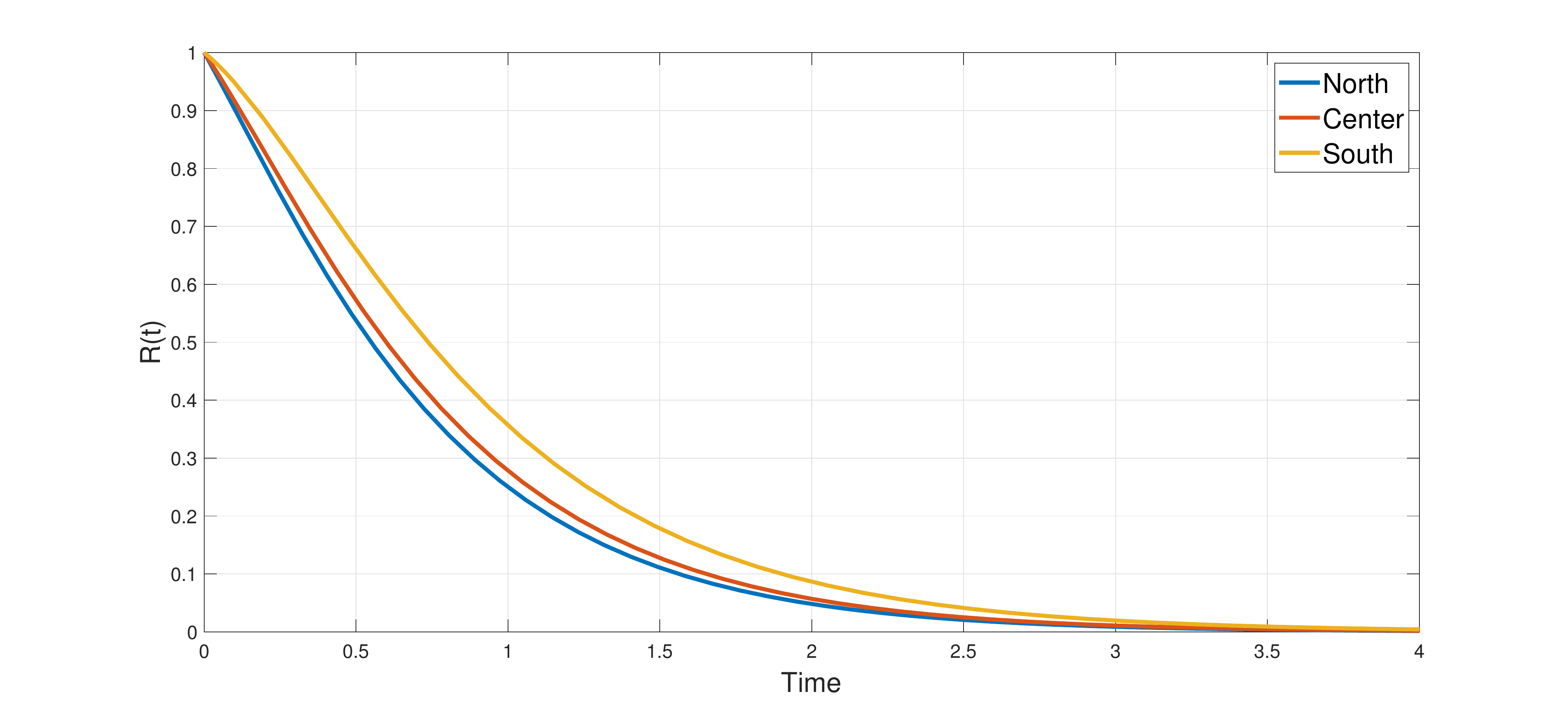}}
  \subfigure[Sensitive bacteria treated with amoxicillin] {\includegraphics[scale=0.2]{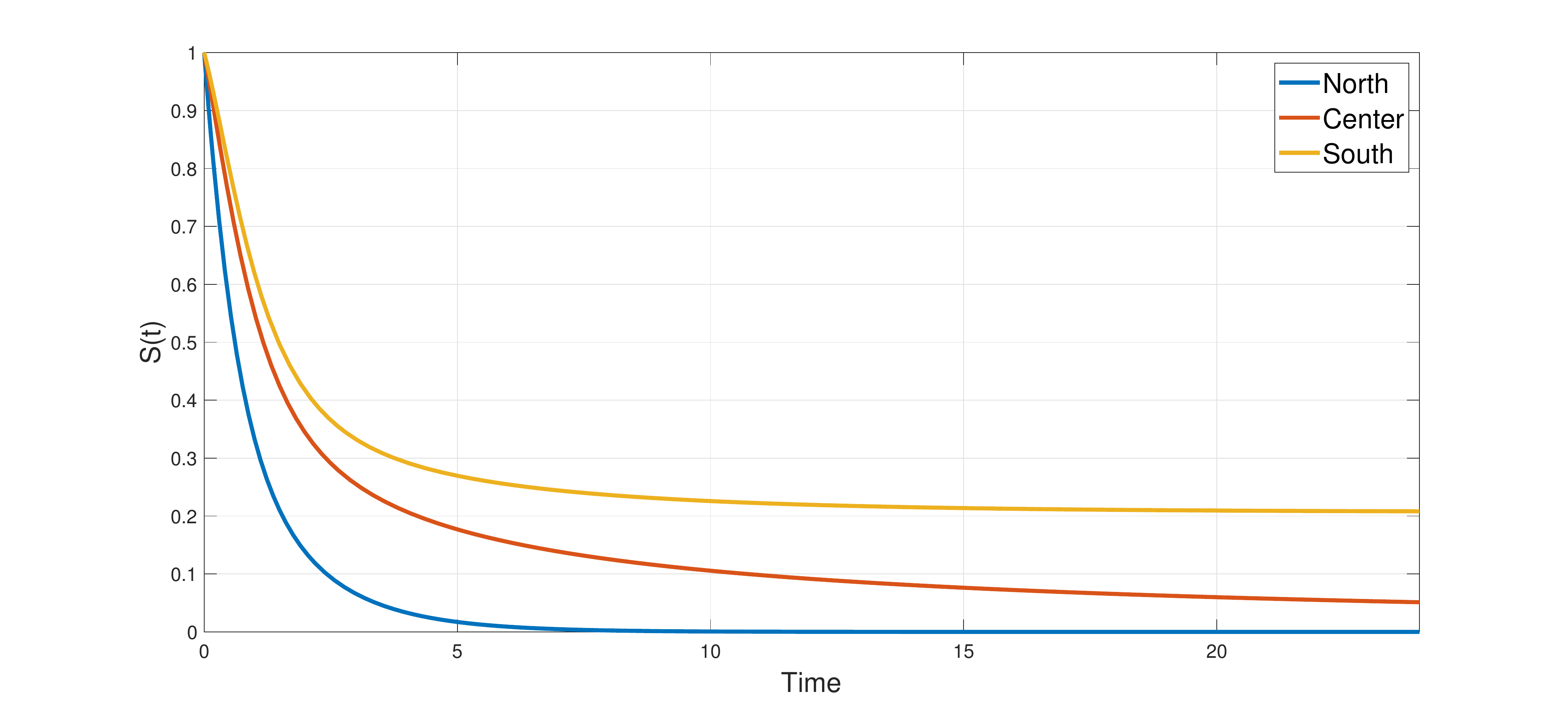}}
  \subfigure[Resistant bacteria treated with amoxicillin] {\includegraphics[scale=0.2]{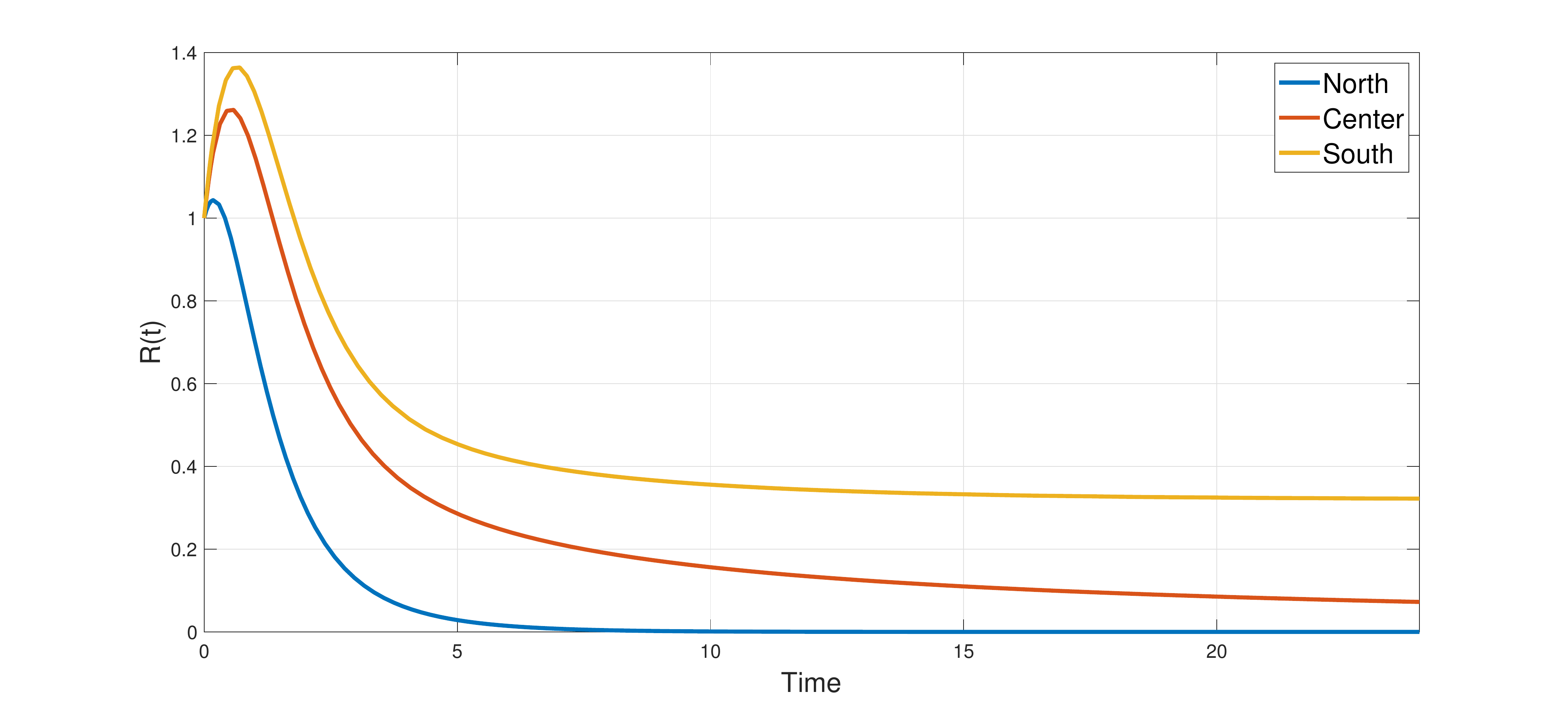}}
    \caption{Numerical simulations conducted for sensitive and resistant bacteria  after administering gentamicin (top) and amoxicillin (bottom) for different regions of Europe. Here, $\Lambda=8$ and $\bar \gamma=2.4$, and the initial condition is (1,0). }
    \label{fig:var_alpha_q}
\end{figure}

%%%%%%%%%%%variation of lambda 
Figure \ref{fig:var_lambda} presents the bacterial densities of sensitive and resistant bacteria in Southern European countries when amoxicillin is administered. In this figure, we illustrate this scenario varying antibiotic supply rates $\Lambda$ (high, standard, and low), with fixed values of $\bar \alpha=0.92$, $\bar \gamma=2.4$, and $\bar q=0.44$. This figure shows  that as the frequency of antibiotic administration increases (lower $\Lambda$ values), the densities of both sensitive and resistant bacteria rise, with subtle oscillations observed in their behaviour. Conversely, reducing the frequency of antibiotic administration leads to a decrease in the populations of both type of bacteria. 

\begin{figure}[h]
   \subfigure[Sensitive bacteria.] {\includegraphics[scale=0.2]{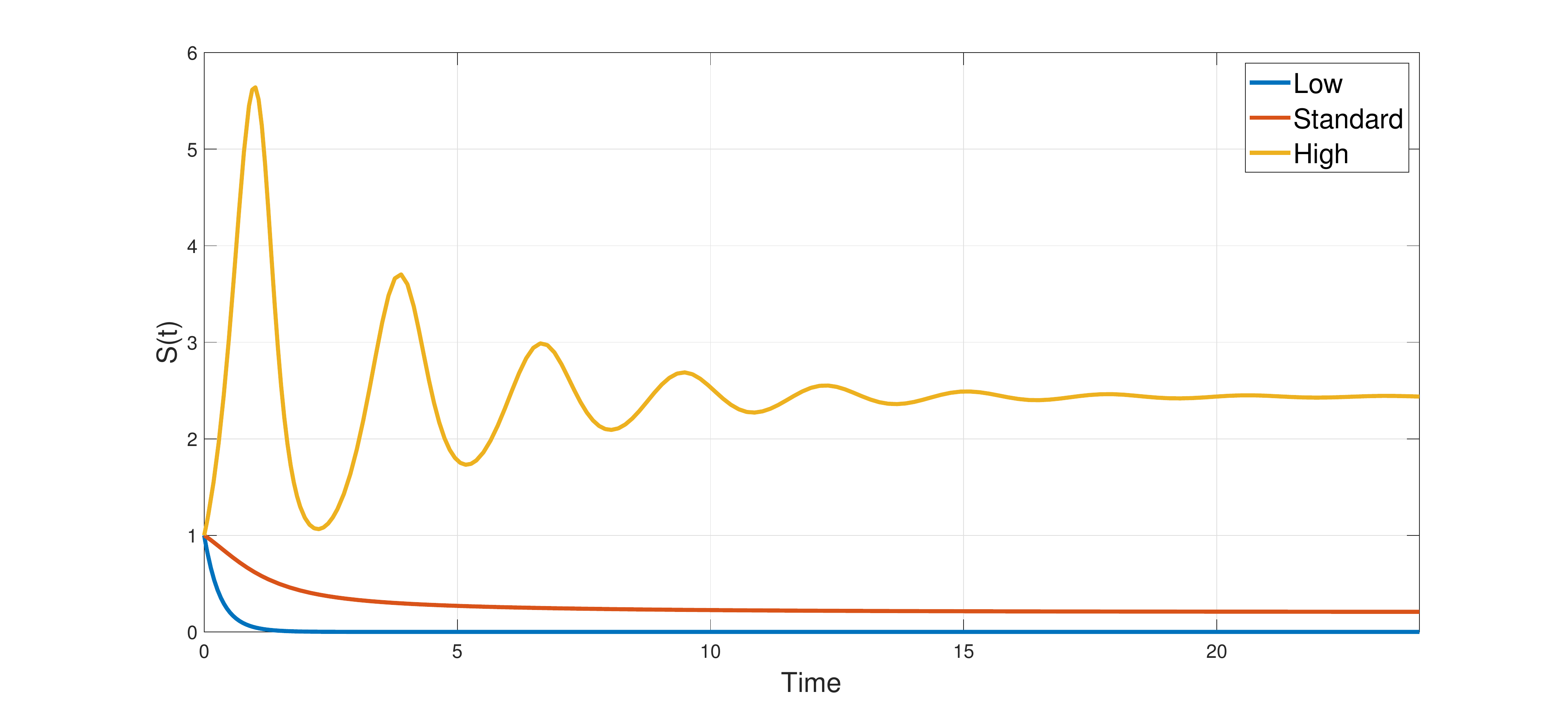}}
  \subfigure[Resistant bacteria.] {\includegraphics[scale=0.2]{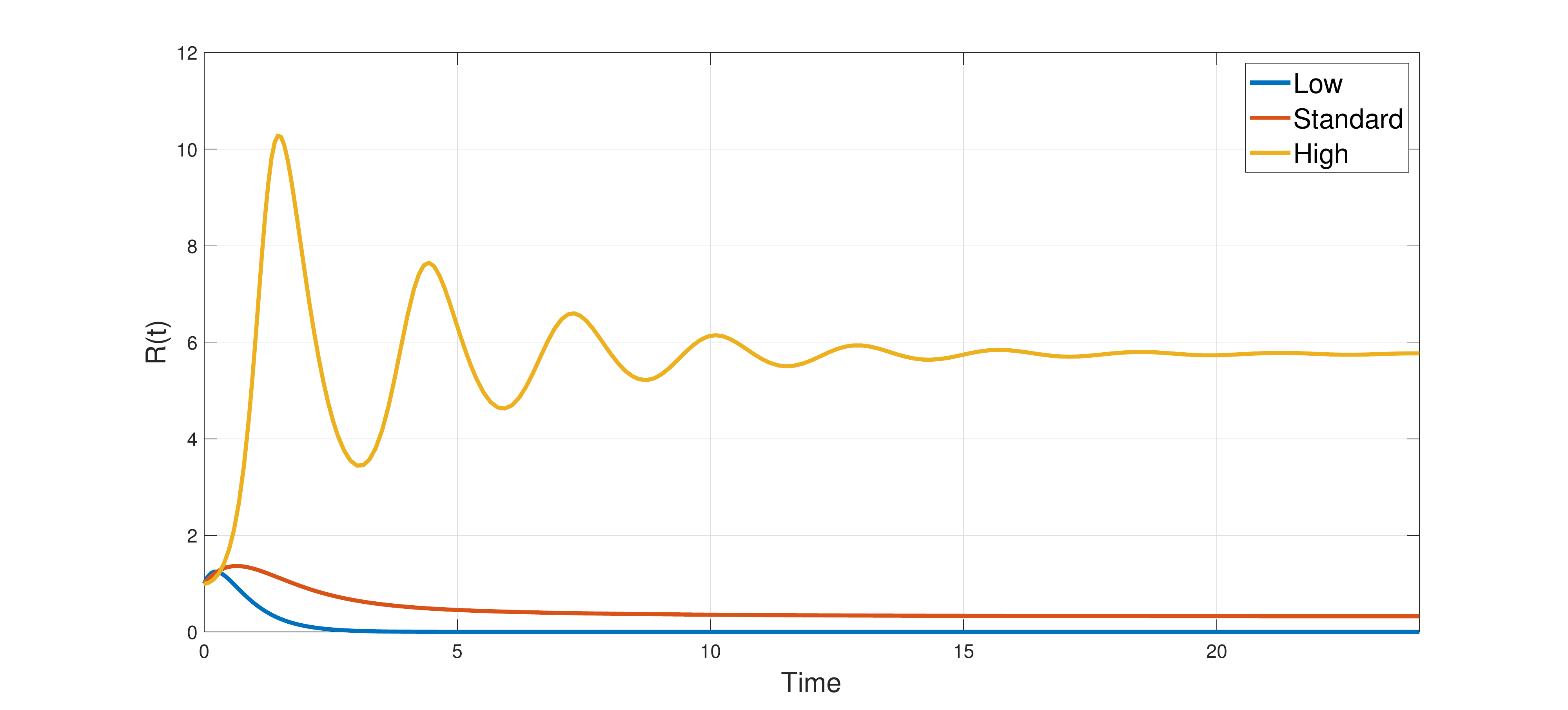}}
    \caption{Numerical simulations conducted for sensitive (a) and resistant bacteria (b) after administering amoxicillin in Southern Europe for different values of the antibiotic supply $\Lambda$. Here, $\bar \alpha=0.92$, $\bar \gamma=2.4$, and $\bar q=0.44$ and the initial condition is (1,0).}
    \label{fig:var_lambda}
\end{figure}

%%%%%%%%%%%%variation of gamma

In Figure \ref{fig:var-gamma}, we investigate the influence of the elimination rate of amoxicillin by the host immune system, denoted as $\bar{\gamma}$,  considering an initial condition of (0,1). The study focused on three categories of hosts: those with a robust immune system (RISH), those with a compromised immune system (CISH), and those with a significantly compromised immune system (SCISH). The figure illustrates that when the host immune system is significantly compromised, sensitive bacteria exhibit a slight oscillatory behaviour. Furthermore, patients with a compromised immune system contribute the most to the burden of sensitive bacteria over time. Conversely, the dynamics for resistant bacteria differ, as patients with a significantly compromised immune system exhibit the highest burden of resistant bacteria.

\begin{figure}[h]
   \subfigure[Sensitive bacteria.] {\includegraphics[scale=0.2]{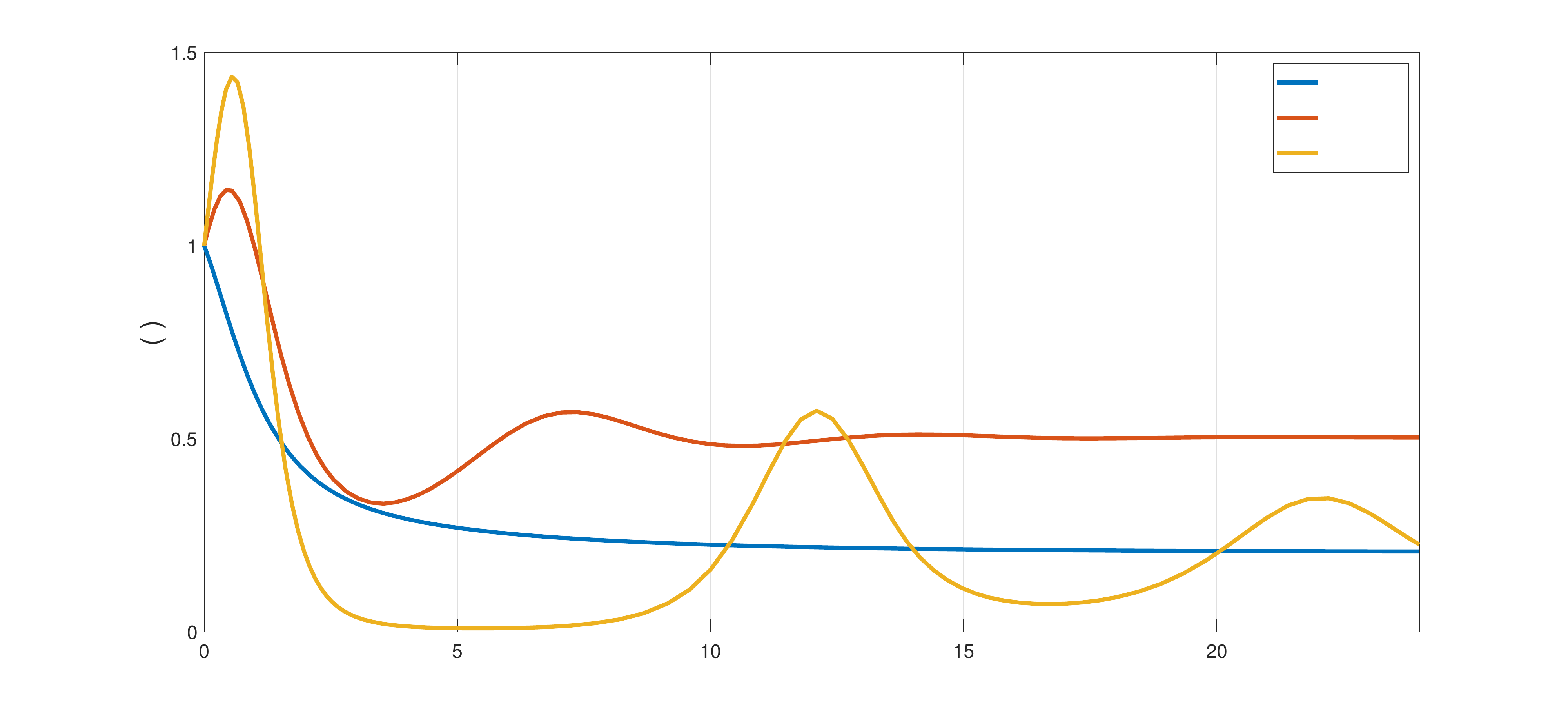}}
  \subfigure[Resistant bacteria.] {\includegraphics[scale=0.2]{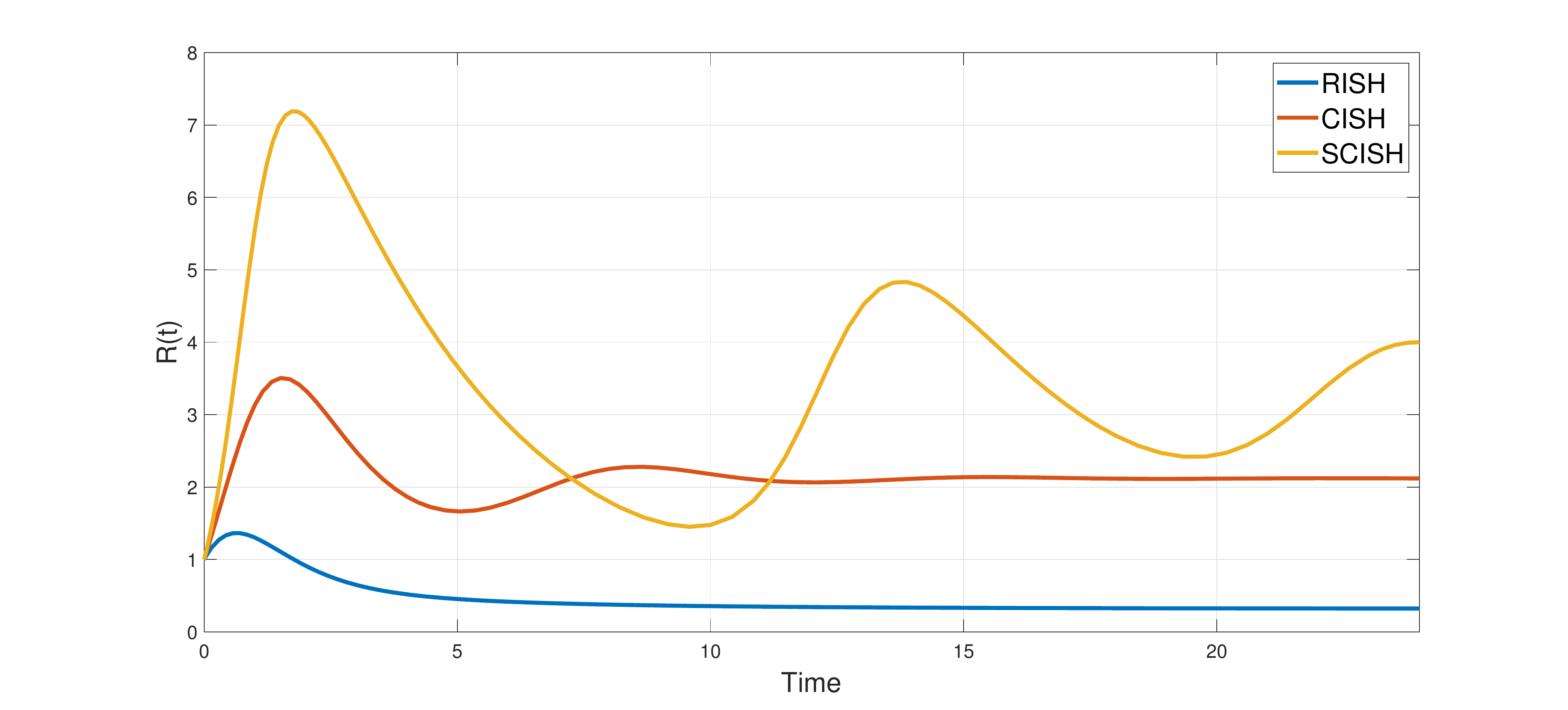}}
    \caption{Numerical simulations conducted for sensitive (a) and resistant bacteria (b) after administering amoxicillin in southern Europe for different immune response hosts $\bar \gamma$. Here,  $\bar \alpha$=0.56 and $\bar q$=0.44, and the initial condition is (1,0).}
    \label{fig:var-gamma}
\end{figure}

%%%%%%%Control measures
%%%%%%%%

To conclude our numerical experiments, we incorporated  $h_1$ and $h_2$ as variables as in the optimal control problem defined in equation \eqref{model}. Our focus was to assess the efficacy of implementing controls in the Southern European countries when treating bacteria with gentamicin and amoxicillin (see Figure \ref{fig:control-genta-am}), thereby comparing the effectiveness of both controls for these antibiotics. 
Figure \ref{fig:control-genta-am} illustrates the results of the numerical experiments.  Here, the implementation of two control strategies was observed to effectively reduce both populations of bacteria, namely sensitive and resistant. Specifically, the control of sensitive bacteria resulted in rapid and slightly oscillatory dynamics, leading to their near elimination. Notably, the control variable $h_2$, responsible for managing mutations through HGT, exerted the greatest effort in controlling the sensitive bacterial population. In contrast, controlling resistant bacteria proved to be more challenging with both antibiotics, although their population decreased over time. Intriguingly, the control variable $h_1$, representing the control of mutations acquired through bacterial exposure to antibiotics, exhibited the highest effort in controlling the resistant bacterial population. These findings highlight the differential contributions of the control variables $h_1$ and $h_2$ in managing sensitive and resistant bacterial populations, respectively.
 
\begin{figure}[h]
\centering
%%%%gentamicin
\subfigure[Sensitive bacteria controlled with gentamicin]{\includegraphics[scale=0.2]{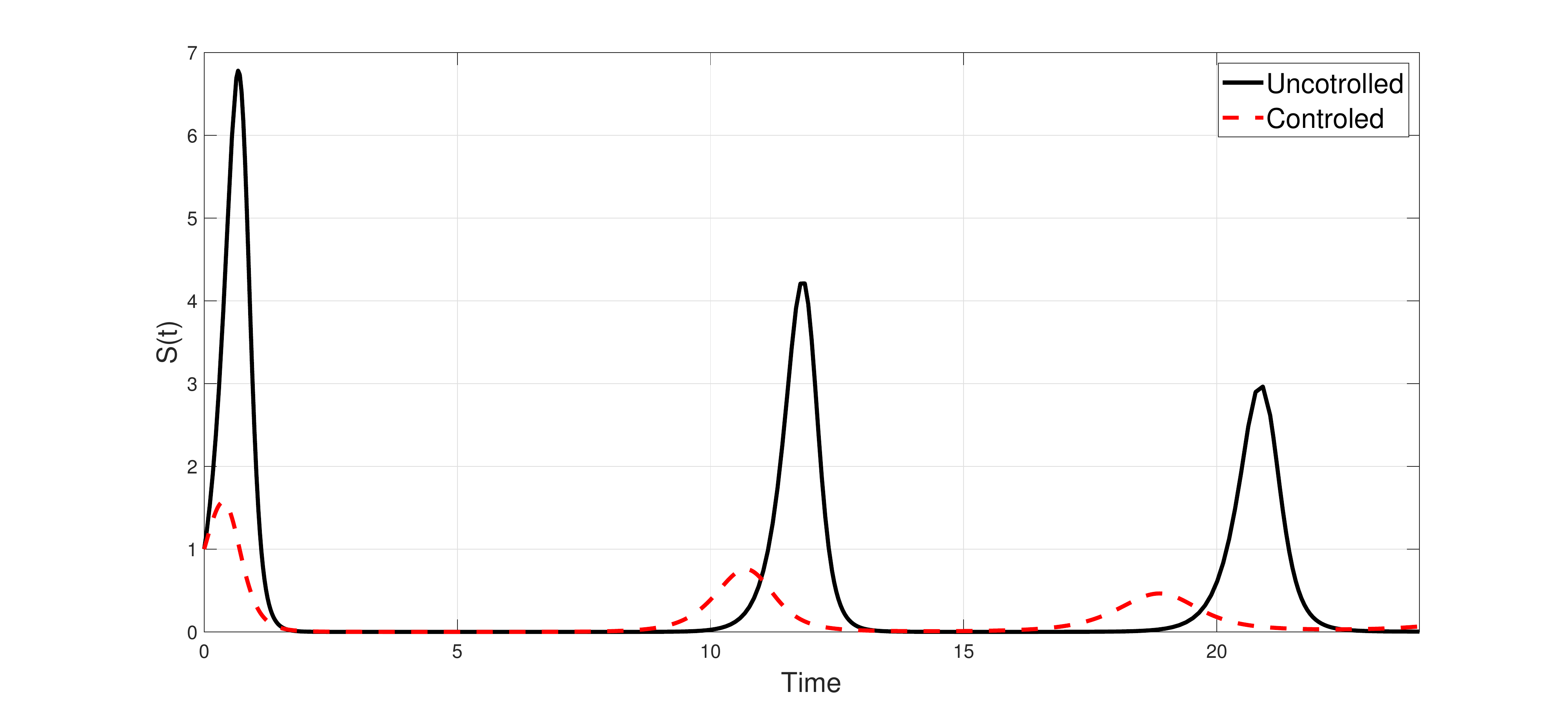}}
\subfigure[Resistant bacteria controlled with gentamicin]{\includegraphics[scale=0.2]{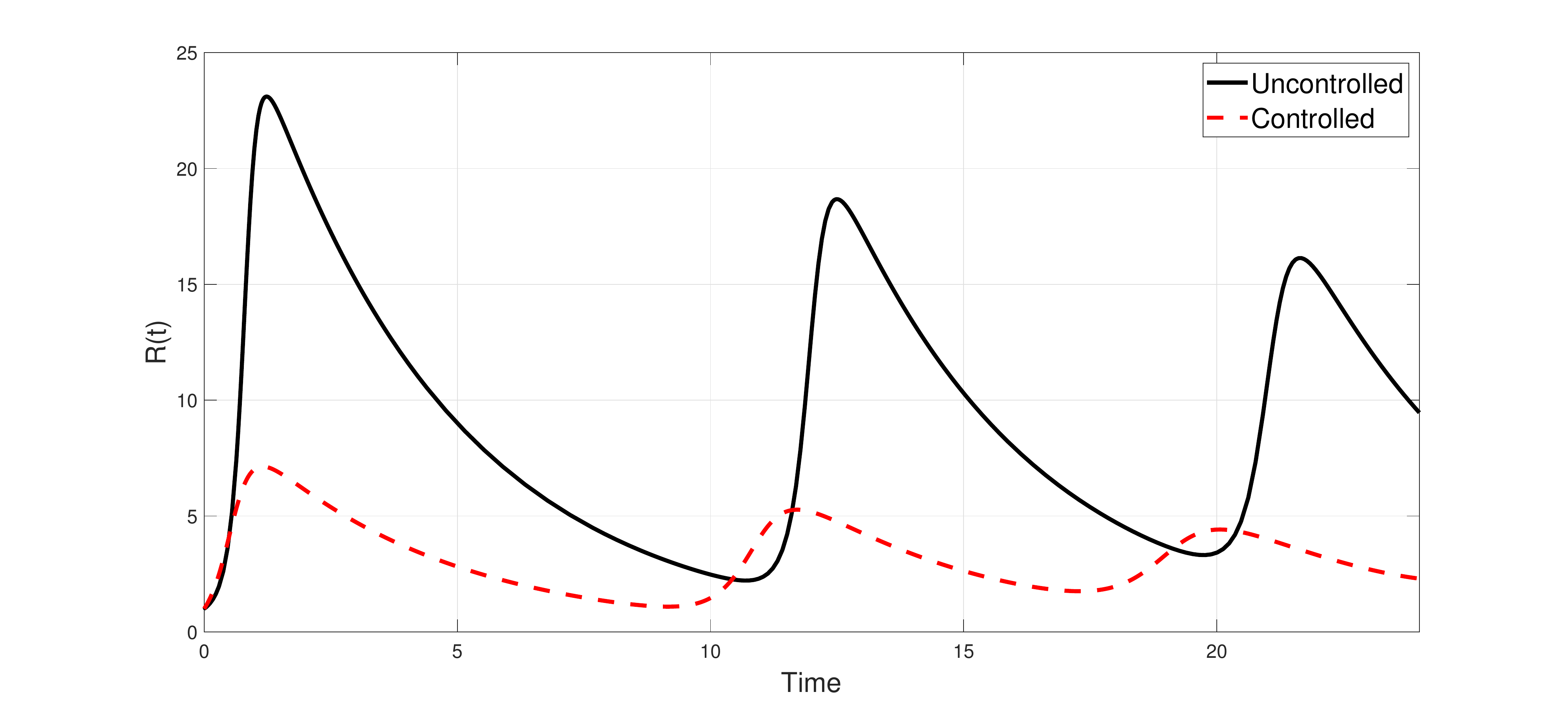}}
%%%%amoxicillin
\subfigure[Sensitive bacteria controlled with amoxicillin]{\includegraphics[scale=0.23]{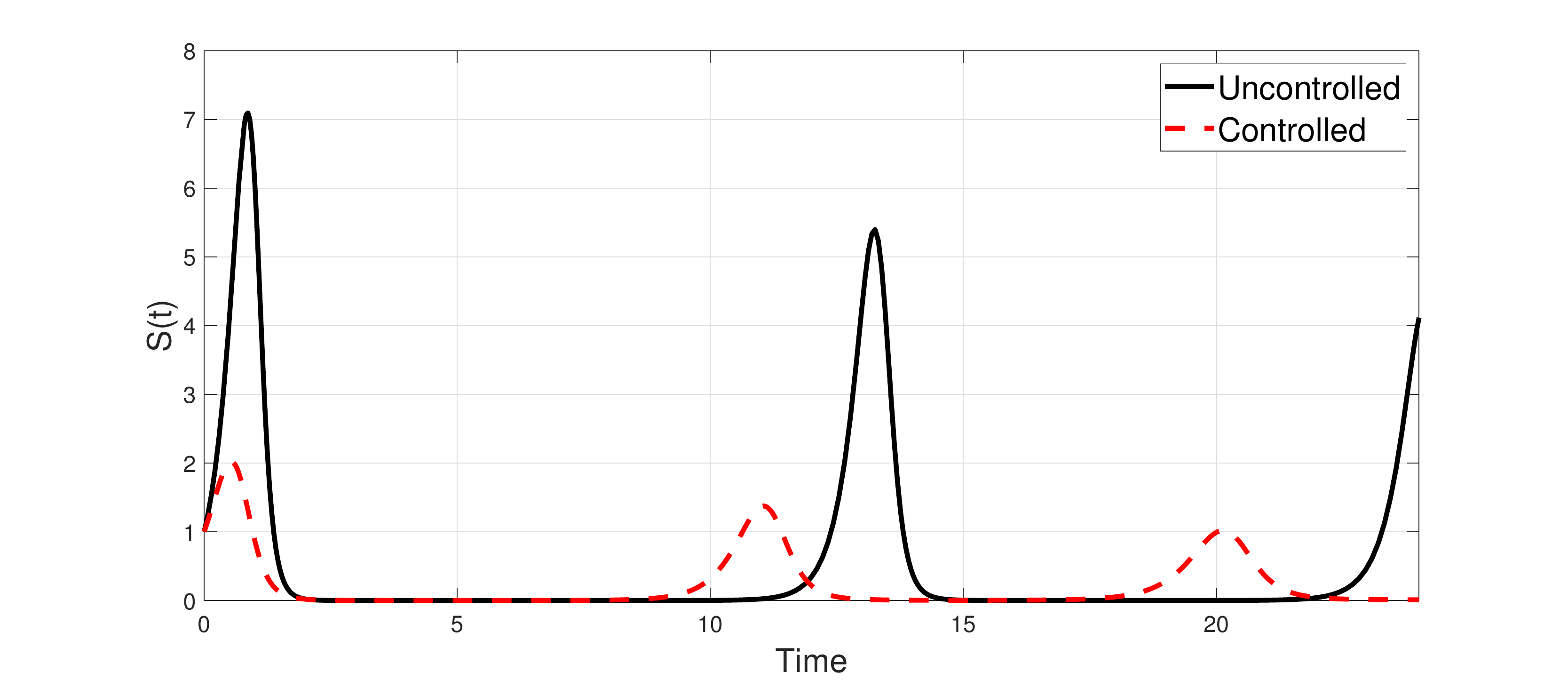}}
\subfigure[Resistant bacteria controlled with amoxicillin]{\includegraphics[scale=0.2]{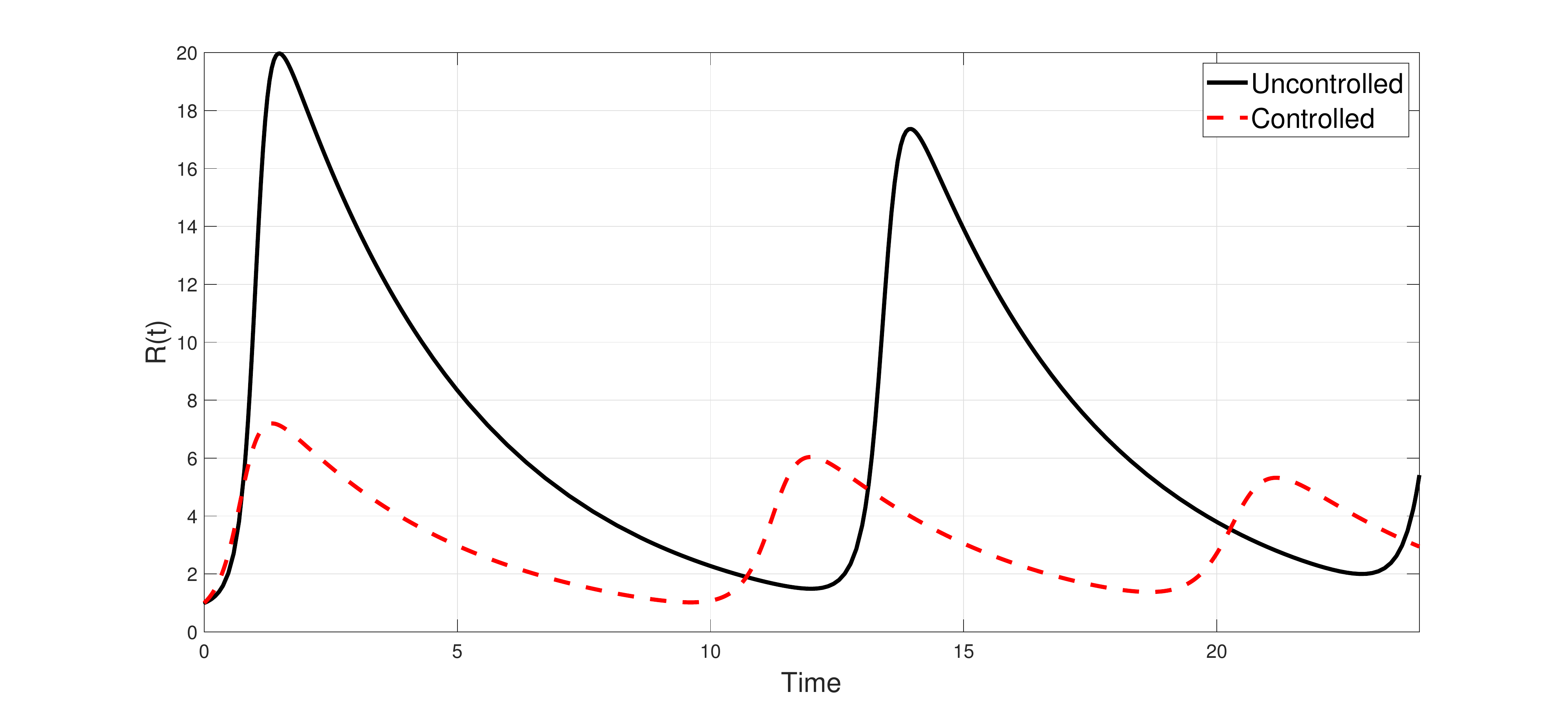}}
%%%controls
\subfigure[Controls behaviour with gentamicin]{\includegraphics[scale=0.2]{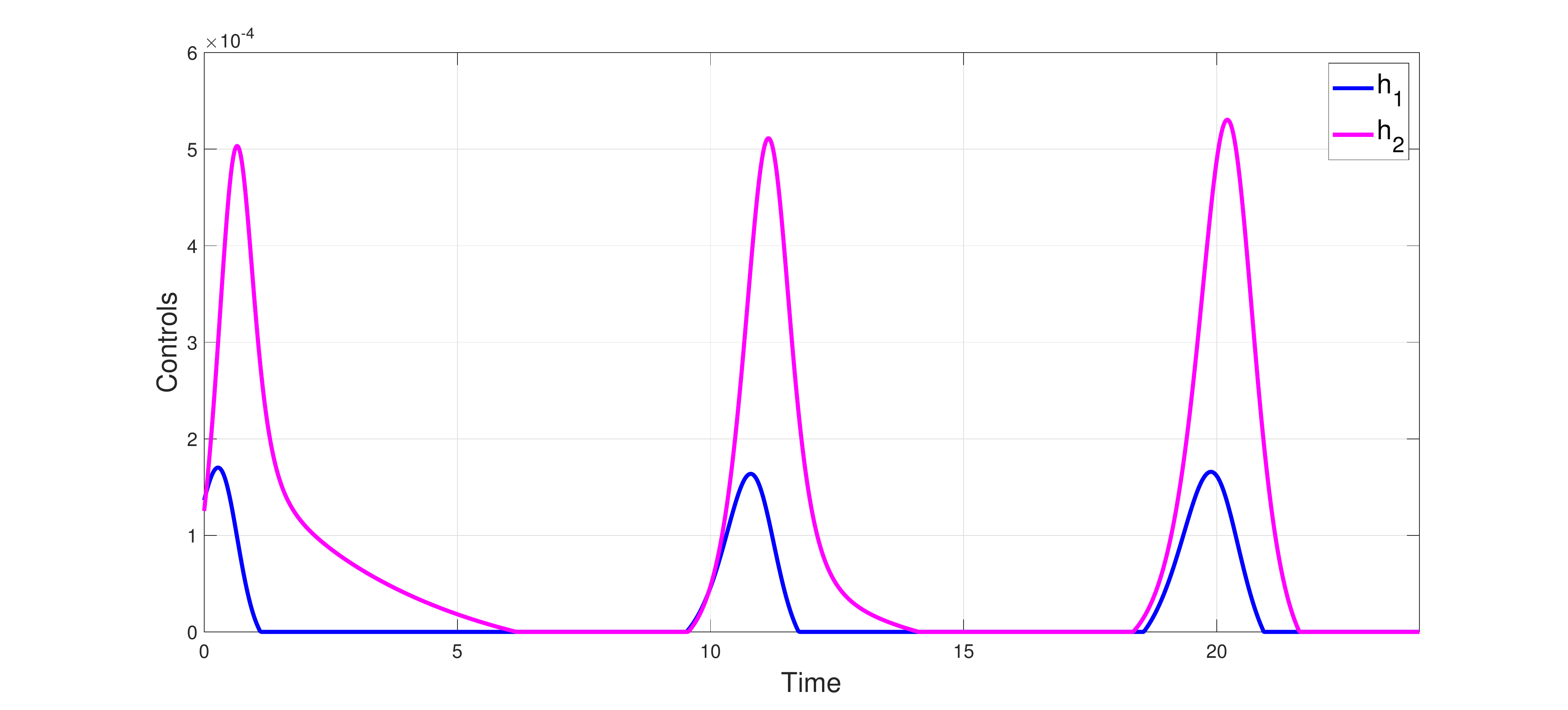}}
\subfigure[Controls behaviour with Amoxicillin]{\includegraphics[scale=0.2]{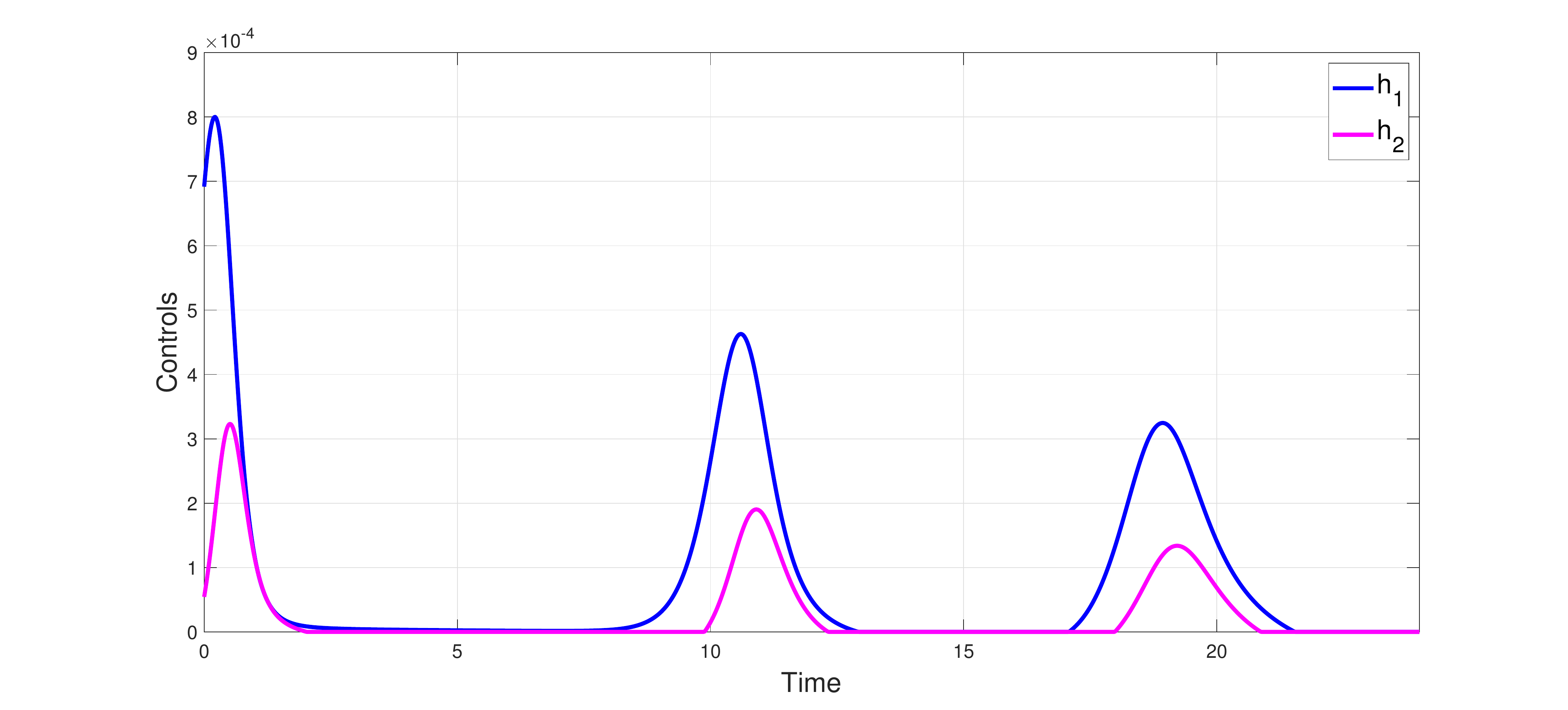}}
\caption{Controls implementation comparison in the Southern European countries when \textit{E. coli} is treated  with gentamicin and amoxicillin.}
\label{fig:control-genta-am}
\end{figure}

%%%%%%%%%
\section{Discussion}\label{sec:discussion}

This study aimed to address the pressing global issue of AMR, which is a significant threat to public health. A simple deterministic mathematical model was presented, wherein sensitive and resistant bacteria interacted in the environment, and the presence of MGEs was dependent on resistant bacteria. The qualitative properties of the model were thoroughly analyzed, leading to the proposal of an optimal control problem that emphasized the importance of avoiding mutations and HGT as primary control strategies.
\par 
Furthermore, a case study was conducted using data from the European Antimicrobial Resistance Surveillance Network (EARS-Net), focusing on the resistance and MDR percentages of \textit{Escherichia coli} to gentamicin and amoxicillin in northern, central, and southern Europe. Theoretical results and numerical experiments provided insights into the challenging nature of controlling the spread of resistance in southern European regions through the administration of amoxicillin. These findings highlight the necessity of considering the crucial role of the host immune system in the control of resistance.
\par 
When a load of sensitive and resistant bacteria was simulated for three different regions of Europe, it became apparent that the administration of amoxicillin resulted in higher densities of both bacterial populations than the administration of gentamicin. Furthermore, a noticeable disparity emerged among the three regions of Europe, with southern and central countries being more heavily affected in terms of the prevalence of resistant bacteria, in contrast to northern countries.
\par 
In the case of amoxicillin supply in southern European countries, as antibiotic administration increased, the densities of both sensitive and resistant bacteria increased, with subtle oscillations observed in their behaviour. Conversely, reducing the frequency of antibiotic administration led to a decrease in the populations of both types of bacteria.
\par 
When the influence of the elimination rate of amoxicillin by the host immune system was investigated, focusing on three categories of hosts--those with a robust immune system (RISH), those with a compromised immune system (CISH), and those with a significantly compromised immune system (SCISH)--it was found that when the host immune system was significantly compromised, sensitive bacteria exhibited a slight oscillatory behaviour. Furthermore,it has been observed that individuals with compromised immune systems have played a prominent role in escalating susceptible bacterial burden over time, while individuals with a significantly compromised immune system have exhibited a more substantial impact on the progression of resistant bacterial burden over time.
\par 
When the variables $h_1$ and $h_2$ were incorporated in southern European countries during the treatment of bacteria with gentamicin and amoxicillin, it was found that the implementation of two control strategies effectively reduced both populations of bacteria, namely sensitive and resistant. Specially, the population of sensitive bacteria showed a rapid decline and an oscillatory dynamics, leading to their near elimination. Notably, the control variable $h_2$, responsible for managing mutations through HGT, exerted the most significant effect on controlling the sensitive bacterial population. In contrast, controlling resistant bacteria proved to be more challenging with both antibiotics, although their populations decreased over time. Intriguingly, $h_1$, representing the control of mutations acquired through bacterial exposure to antibiotics, exhibited the highest effort to control the resistant bacterial population. These findings highlight the differential contributions of the control variables $h_1$ and $h_2$ to the management of sensitive and resistant bacterial populations.
\par 
This study sheds light on the dynamics of sensitive and resistant bacterial populations in response to different antibiotics and immune system conditions in Europe. These findings demonstrate that amoxicillin administration leads to higher densities of both sensitive and resistant bacteria compared to when gentamicin was administered. Moreover, a significant regional disparity has emerged, with southern and central countries bearing a more significant burden of resistant bacteria than their northern counterparts. This study provided valuable insights into the dynamics of bacterial populations in response to different antibiotics, immune system conditions, and control strategies. These findings contribute to our understanding of the challenges associated with managing bacterial resistance and highlight the importance of tailored approaches based on specific characteristics of the bacterial population and treatment conditions.
\par 
In conclusion, the scope of this study primarily focused on the interaction between sensitive and resistant bacteria, control strategies to mitigate resistance, and the influence of immune system conditions. It provides insights into the specific case of amoxicillin and gentamicin administration in European countries, highlighting regional differences in resistance burdens. However, this study has some limitations. First, it focuses solely on European countries, which may differ from the global AMR landscape. The findings and observations may not be applicable to regions outside Europe, where different factors and dynamics could influence the spread and control of resistance. Second,  this study mainly explored the dynamics of the \textit{E. coli} population, and the conclusions may not be generalizable to other bacterial species. Third,  while the study examined the impact of antibiotic administration and immune system conditions on bacterial populations, it did not consider other factors that could influence AMR, such as antibiotic usage patterns, infection control practices, or genetic variations in the bacterial strains. These additional factors could provide a more comprehensive understanding of the complex AMR dynamics. Further research and interdisciplinary efforts are warranted to develop effective strategies to combat antimicrobial resistance and to safeguard public health.

%%%%%%%%%%
\section*{Acknowledgements}

A. Peterson and P.  Aguirre appreciate the support of Proyecto UTFSM PILI1906 and Proyecto Basal CMM Universidad de Chile. J. Romero, K. Acharya and B. Nasri appreciate the support provided by the One Health Modelling Network for Emerging Infections (OMNI-RÉUNIS), which is financially supported by the Natural Sciences and Engineering Research Council of Canada (NSERC) and the Public Health Agency of Canada (PHAC).

%%%%%%%%%%5END OF THE DOCUMENT 
% To print the credit authorship contribution details
\printcredits

%% Loading bibliography style file
%\bibliographystyle{model1-num-names}
\bibliographystyle{cas-model2-names}
\bibliography{cas-refs.bib}

% Loading bibliography database
%\bibliography{}
%
\appendix 

\section{Appendices}
\subsection{Proof of Lemma \ref{lemma1}}\label{appendix:A1}
To confirm that $\Omega$ functions as a trapping region, it is necessary to prove that the vector field, which is described by the right-hand side of System \eqref{model_res}, consistently points towards $\Omega$ at all points along the boundary $\partial\Omega$. The portion of \eqref{model_res} that lies perpendicular to the green boundary depicted in Figure \ref{fig_invarianta} is
\begin{equation}\nonumber
       (f_1(0,y),f_2(0,y)) \cdot (1,0) = f_1(0,y)= 0,
\end{equation}
where $0\leq y\leq 1$.
As a consequence, the green boundary located on the $y$-axis maintains its position, making it invariant. Consequently, no trajectory of \eqref{model_res} can cross over into the second quadrant by passing through the green boundary. Furthermore, the point $(0,0)$ is an equilibrium point, and at $(0,1)$, the vector field exhibits the following behavior:
$$ (f_1(0,1),f_2(0,1)) =(0,-\gamma),$$
and, hence, the corresponding orbit remains in the green boundary for $t>0$.
\noindent 

Analogously, the component of \eqref{model_res} in the direction orthogonal to the blue boundary in Figure \ref{fig_invarianta} is
 \begin{equation}\nonumber
       (f_1(x,0),f_2(x,0)) \cdot (0,1) = f_2(x,0)= (1-h_1)qx  \geqslant 0.
 \end{equation}
Since, by definition  $(1-h_1)\geqslant 0$, $q> 0$ and $0<x<1$. If $h_1>0$ the vector field \eqref{model_res} points toward the interior of $\Omega$ along points in the open blue boundary. It follows that orbits are always entering $\Omega$ along the set $y=0$ with $0<x<1$.
In the special case $h_1=0$ the entire $x$-axis is invariant and, hence, no orbit starting in $\Omega$ can cross it towards the $y<0$ half-plane. 
\par
Similarly, orbits are always entering $\Omega$ along the open black boundary line since we have
\begin{equation}\nonumber
       (f_1(x,1-x),f_2(x,1-x)) \cdot (-1,-1)= -f_1(x,1-x) - f_2(x,1-x) =x \alpha + \gamma > 0,
\end{equation}
for $0<x<1$.

Finally, at the point $(1,0)$ the vector field 
$$ (f_1(1,0),f_2(1,0)) =\left(-(\alpha+\gamma)-(1-h_1)q,(1-h_1)q\right)$$
points towards the interior of $\Omega$ since
$$-1<\dfrac{(1-h_1)q}{-(\alpha+\gamma)-(1-h_1)q}\leq0,$$
and, hence, the corresponding orbit remains in $\Omega$ for $t>0$.

Therefore, $\Omega$ is a closed, connected set such that orbits starting in $\partial\Omega$ remain in $\Omega$ for every $t>0$.

%%%%%
%%%%
\subsection{Proof of the condition $p(y_{max})<0$} \label{appendix:A2}

\begin{equation*}\label{eq1}
\begin{array}{rcl}
p(y_{max})&=&-a_2y_{max}^2+a_1y_{max}+a_0  \\ \\
          &=&-\left[ h_{r2}(R_s+h_s)-h_sR_r\right]\left(\dfrac{R_s-1}{h_s+R_s}  \right)^2  \\  &&+\left[h_{r2}(R_s-1)-h_{r1}q(R_s+h_s)+R_r-R_s  \right]\left(\dfrac{R_s-1}{h_s+R_s}  \right)+h_{r1}q(R_s-1)  \\ \\
          &=& \dfrac{-h_{r2}(R_s-1)^{2}}{h_s+R_s}  +h_sR_r\left(\dfrac{R_s-1}{h_s+R_s}  \right)^2
          +\dfrac{h_{r2}(R_s-1)^{2}}{h_s+R_s}-h_{r1}q(R_s -1)\\
          &&+\dfrac{(R_r - R_s)(R_s-1)}{h_s+R_s}+h_{r1}q(R_s-1) \\ \\
          &=&\dfrac{R_s-1}{h_s+R_s}\left( h_sR_r\dfrac{R_s-1}{h_s+R_s}  +R_r - R_s \right)\\ \\
          &=&\dfrac{R_s-1}{(h_s+R_s)^{2}} \left( h_sR_rR_s-h_{s}R_r   +(R_r - R_s)(h_s+ R_s) \right)\\ \\
          &=&\dfrac{R_s-1}{(h_s+R_s)^{2}} \left( h_sR_rR_s-h_{s}R_r   +R_rh_s - R_sh_s  +R_r R_s -R_s^{2} \right)\\ \\
          &=&\dfrac{R_s-1}{(h_s+R_s)^{2}} \left( h_sR_rR_s - R_sh_s  +R_r R_s -R_s^{2} \right).
\end{array}
\end{equation*}
%%%
If $R_r (h_s+1)< h_s +R_s$, a simple computation reveals that $p(y_{max})<0$ since $\dfrac{R_s-1}{(h_s+R_s)^{2}}>0$.
%%%%%%%

\subsection{Proof of Theorem \ref{thm:teocontrol}} \label{appendix:A3}

When applied to \eqref{model}, Pontryagin's Principle ensures the existence of a vector composed of adjoint variables $\bm \lambda$, with components that fulfill the following condition:
	\begin{align*}
	\dot{\lambda}_1 &= -\frac{\partial H}{\partial S}, \quad \lambda_1(T)=0,  \\
	\dot{\lambda}_2 &= -\frac{\partial H}{\partial R}, \quad \lambda_2(T)=0, \\
	H &= \max_{h_i\in \mathcal{U}}H.
	\end{align*}
	Substituting the derivatives of $H$ with respect to $S$ and $R$ in the previous equations we obtain System~\eqref{adjoint_system}. The optimality conditions for the Hamiltonian are given by
	$
	\dfrac{\partial H}{\partial \textbf{h}^\star}= 0.
	$
  This implies that
	\begin{align*}
h_1&=\frac{-w_1-\bar q\Lambda S(\lambda_1-\lambda_2)}{2w_2} \\
h_2&=\frac{-b_1-aRS(\lambda_1-\lambda_2)}{2b_2}.
    \end{align*}

	Consequently, the optimal controls $h_1^*$ and $h_2^*$ are given by \eqref{optimal_controls},
which completes the proof.
\end{document}